\theoremstyle{definition}
\newtheorem{definition}{Definition}[section]
\theoremstyle{plain}
\newtheorem{theorem}[definition]{Theorem}
\newtheorem{lemma}[definition]{Lemma}
\newtheorem{corollary}[definition]{Corollary}
\theoremstyle{remark}
\newtheorem{remark}[definition]{Remark}
\begin{document}

\title{Entropy of Wiener integrals with~respect~to~fractional~Brownian~motion}

\author{Iryna Bodnarchuk$^1$, Yuliya Mishura$^1$, Kostiantyn Ralchenko$^{1,2}$}

\address{$^1$Taras Shevchenko National University of Kyiv\\
$^2$University of Vaasa}

\thanks{YM is supported by The Swedish Foundation for Strategic Research, grant UKR24-0004, and by the Japan Science and
Technology Agency CREST, project reference number JPMJCR2115.
KR is supported by the Research Council of Finland, decision number 359815.
YM and KR acknowledge that the present research is carried out within the frame and support of the ToppForsk project no.~274410 of the Research Council of Norway with the title STORM: Stochastics for Time-Space Risk Models.}

\begin{abstract}
The paper is devoted to the properties of the entropy of the exponent-Wiener-integral fractional Gaussian process (EWIFG-process), that is a Wiener integral of the exponent with respect to fractional Brownian motion. Unlike fractional Brownian motion, whose entropy has very simple monotonicity properties in Hurst index, the behavior of the entropy of EWIFG-process is much more involved and depends on the moment of time. We consider these properties of monotonicity in great detail.
\end{abstract}

\maketitle

\section{Introduction}\label{sec:Intr}
Thousands of papers have been devoted to the concept of thermodynamic entropy, as well as to the notions of Shannon, Rényi, and various other types of entropy for discrete probability distributions and distributions with densities. Being mainly involved in mathematics, we only mention papers and books \cite{campbell1972characterization,feder1994relations,martin2011mathematical,papalexiou2012entropy,renyi1961measures,ribeiro2021entropy,vancam} without in the least claiming that the given list of citations is complete.

The reason for this  interest is that  the concept of entropy is interesting both in itself and from the point of view of numerous applications. And if we study the entropies of certain probability distributions, we need to know the properties of the entropies as functions of the distribution parameters. As one example, the paper \cite{MMRR2023} took a step towards systematizing the properties of different entropies calculated for the normal distribution. It was established  that Shannon, R\'{e}nyi, one- and two-parameter generalized R\'{e}nyi, Tsallis and Sharma-Mittal entropies of the centered normal distribution  are proportional (which is quite natural) to the logarithm of the  variance of the distribution. As one example, Shannon entropy of the centered normal distribution with variance $\sigma^2$ equals $$H_S=\frac12(1+\log(2\pi))+\log \sigma.$$ The next, also very natural question is to calculate entropies of   one-dimensional or multi-dimensional distributions  of various Gaussian processes and to study their properties. These problems were the subject of articles \cite{MMRR2023} and \cite{MishuraRalchenkoSchilling2022} In the first one, we   calculated six entropies of one-dimensional distributions of fractional, subfractional,  bifractional, multifractional Brownian motions and  tempered fractional Brownian motion, while in the second one, we followed the results of the book \cite{stratonovich} and  studied the properties of Shannon entropy of vector-valued fractional Gaussian noise, taken in the integer moments of time. The main hypothesis that   entropy increases when the Hurst index increases from zero to $\frac12$, and decreases when the Hurst index increases from $\frac12$  to 1, in the general case has been proven numerically  and still awaits an analytical proof.  In the present paper, we decided to study in its entirety the properties of the entropies of the fractional Gaussian process that is a Wiener integral of exponent w.r.t. the fractional Brownian motion, $X_t^H=\int_0^t e^sdB_s^H$, as functions of two parameters, $t\ge 0$ and $H\in[0, 1]$. It is interesting that unlike the fractional Brownian motion $B^H$ with Hurst index $H\in(0,1)$  whose variance at moment $t$ equals simply $t^{2H}$ and, therefore, its behavior in $t$ and $H$ is evident, the presence of exponential integrand leads to much more exotic behavior. We chose exponential integrand by three reasons: it is more interesting than power integrand for which the behavior of the variance of the respective Wiener integral is more or less trivial; it  allows us to draw some non-obvious conclusions;   it participates in the representation of fractional Ornstein--Uhlenbeck process. 

The paper is organized as follows. Section \ref{sec:Prelim} introduces our Wiener integral, that we call  ``exponent-Wiener-integral fractional Gaussian process'' (EWIFG-process), together with its square characteristics. We extend it to the cases $H=0$ and $H=1$ and also give square characteristics of these ``boundary'' processes.  Section \ref{IBYMsecfunc} contains the main results describing the monotonic behavior of the entropy of EWIFG-process as the function of $H\in[0,1]$ and $t>0.$ If in time it strictly increasing, the behavior in $H$ is unexpectedly much more involved and depends on the moment of time. We calculated two  moments of time that change the monotonicity, numerically as the solutions of transcendent equations. In Section \ref{IBYMsecsec4} we generalize some of the results to more general exponent $e^{kt}$, emphasizing on $k=-1$. Appendix provides an auxiliary result.

\section{Preliminaries}\label{sec:Prelim} 
\subsection{Fractional Brownian motion with Hurst index  \texorpdfstring{$H\in[0,1]$}{H from [0,1]}; some  properties}  Recall that the fractional Brownian motion (fBm) with Hurst index $H\in(0,1)$ is a zero mean Gaussian process $B^H=\{B_t^H, t\ge 0\}$ with covariance function $$\mathsf{R}_{B^H}(t, s)= \mathsf{E} B_t^HB_s^H=\frac12(t^{2H}+s^{2H}-|t-s|^{2H}),$$ and, consequently, with variance $\mathsf{E}(B_t^H)^2=t^{2H}.$ It is well known (see e.g., \cite{Mish-book})
that the trajectories of fBm with Hurst index $H$ are a.s.\ H\"{o}lder continuous up to order $H$.
\begin{remark}\label{rem-1}
    It is possible to extend the notion of fBm to the values $H=1$ and $H=0.$ Namely, as $H\to 1$, covariance $\mathsf{R}_{B^H}(t, s)$ tends to $ts$ for any $t, s\ge 0$, which means that, by continuity, it is natural to put $B_t^1=t\xi$, where $\xi$ is a standard normal variable. Also, in the paper \cite{novikov} fBm was extended to the value  $H=0$, by continuity of covariance function, and, as the result, fBm with $H=0$ has a form  $B_t^0=\frac{\xi_t-\xi_0}{\sqrt{2}}$, where $\xi_t$ is a white noise, i.e., the set of independent standard normal variables. Note, however, that in this case the variance of $B_t^0$  has discontinuity at zero, because $\mathsf{E}(B_t^0)^2=1$ for $t>0$ and is zero for $t=0$.
\end{remark}
\subsection{Wiener integrals w.r.t.\ fBm and their square characteristics}
According to the paper \cite{zahle}, if to consider two measurable nonrandom functions $f, g\colon[0,T]\to \mathbb{R}$ such that $f$ is H\"{o}lder up to order $\alpha>0$,  $g$ is H\"{o}lder up to order $\beta>0$, and $\alpha+\beta>1$, then the integral $\int_0^Tf(s)dg(s)$ exists as the generalized Lebesgue--Stieltjes integral and, moreover,   equals  the limit of the Riemann integral sums. 

Therefore, for any $H\in(0,1)$   the following Wiener integral with respect to (w.r.t.) fractional Brownian motion is well defined:
\begin{gather}\label{eq:Xt}
X_t^H=\int_0^t e^sdB_s^H. 
\end{gather}
However, if for $H>1/2$ representation \eqref{eq:Xt} is convenient for calculations of  its characteristics, 
for $0<H<1/2$ it is better to integrate by parts and consider the representation 
\begin{gather}\label{repres}
 X_t^H=e^{t}B_t^H-\int_0^t e^{s}B_s^Hds.
\end{gather}
\begin{remark}\label{remrem1} Of course, this representation is valid for any $H\in(0,1)$ as well, despite traditionally it is used for $0<H<1/2$.
    \end{remark} For the properties of the respective representations see also \cite{maje}. In order to distinguish $X^H$ from other Gaussian processes, we shall call it ``exponent-Wiener-integral fractional Gaussian process (EWIFG-process)''.
Now, let us calculate and simplify covariance and variance of $X^H$ for $H\in(0,1)$. In this connection, recall that for $H\in(1/2, 1)$, any $T_1, T_2>0$ and bounded, sufficiently smooth, for example,  differentiable function $f\colon [0,T_1\vee T_2]\to\mathbb{R}$ we have the equality, see \cite{valk},
\begin{equation}\label{covarvar}
\mathsf{R}_{X^H}(T_1,T_2) = \mathsf{E} X^H_{T_1}X^H_{T_2}
= H(2H-1)\int_0^{T_1}\!\!\int_0^{T_2}f(u)f(v)|u-v|^{2H-2}dudv.
\end{equation}
 Taking this into account, we can formulate and prove the next lemma (in principle, this result is well-known, however, we add it for the reader's convenience).
 \begin{lemma}\label{lem:covar}
1) For $0<H<1 $ and $s, t\ge 0$ the covariance function $\mathsf{R}_{X^H}(t, s)$ of the process $X^H$ can be presented as 
\begin{equation}\label{eq:RXH<1/2base}
\begin{split}
 \mathsf{R}_{X^H}(t, s) = \mathsf{E} X_t^HX_s^H &= \frac{1}{2}
 e^{t+s}\left(t^{2H}+s^{2H} - |t-s|^{2H}\right)
 \\
&\quad -\frac{1}{2}e^{s}\int_0^{t }e^{u}(u^{2H} +s^{2H}-|s-u|^{2H})du
 \\
&\quad -\frac{1}{2}e^{t}\int_{0}^{s}e^{v}(t^{2H}+v^{2H}-|t-v|^{2H})dv
 \\
&\quad  +\frac{1}{2}\int_{0}^{t}\!\!\int_{0}^{s}e^{u+v}(u^{2H} +v^{2H}-|v-u|^{2H})dudv.
\end{split}\end{equation}
In particular, the variance of $X_t^H,\ t\ge 0$, can be calculated as
\begin{gather}\label{varnevar1}
 \mathsf{V}(H, t):=\mathsf{E} \left(X_t^H\right)^2=e^{t}t^{2H}+\frac{1}{2}e^{2t}\int_0^te^{-z}    z^{2H}dz
  -\frac{1}{2} \int_0^t e^{z} z^{2H}dz.
\end{gather}

2) For $1/2<H<1$ and $s, t\ge 0$ the covariance function $\mathsf{R}_{X^H}(t,s)$ of the process $X^H$ can be presented as 
\begin{gather}\label{covarnevar1}
 \mathsf{R}_{X^H}(t, s)=\mathsf{E} X_t^HX_s^H=H(2H-1)\int_0^t\!\!\int_0^s e^{u+v}|u-v|^{2H-2}dudv.   
\end{gather}

In particular, the variance of $X_t,\ t\ge 0$, can be calculated as
\begin{gather}\label{eq:varbase}
 \mathsf{V}(H, t):=\mathsf{E} \left(X_t^H\right)^2
 =H(2H-1)\int_0^t\!\!\int_0^t e^{u+v}|u-v|^{2H-2}dudv.
\end{gather}
\end{lemma}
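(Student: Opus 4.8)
The plan is to prove the two parts separately, in both cases reducing everything to the covariance of $B^H$. For part~1 I would take \eqref{repres} as the working identity for $X^H_t$ --- legitimate for every $H\in(0,1)$ by Remark~\ref{remrem1} --- expand the product $X^H_tX^H_s$ by multiplying the two sums, and then exchange the expectation with the two Lebesgue integrals in $u$ and $v$. This exchange is justified by Fubini's theorem: $B^H$ has continuous paths, so the integrals $\int_0^{\cdot} e^{u}B^H_u\,du$ are defined pathwise, and $\int_0^t\!\int_0^s e^{u+v}\,\mathsf{E}|B^H_uB^H_v|\,du\,dv\le\int_0^t\!\int_0^s e^{u+v}u^Hv^H\,du\,dv<\infty$ by the Cauchy--Schwarz inequality (the single integrals are controlled in the same way). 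After the exchange each of the four resulting terms involves only $\mathsf{R}_{B^H}(\cdot,\cdot)=\frac12(\cdot^{2H}+\cdot^{2H}-|\cdot-\cdot|^{2H})$, and collecting them gives exactly \eqref{eq:RXH<1/2base}.

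To obtain the variance \eqref{varnevar1} I would set $s=t$ in \eqref{eq:RXH<1/2base}: the two single integrals coincide, the integrals $\int_0^t e^{u}(t-u)^{2H}\,du$ that appear become $e^{t}\int_0^t e^{-w}w^{2H}\,dw$ after the substitution $w=t-u$, and the double integral is simplified by first restricting to $u\le v$ by symmetry, substituting $w=v-u$, and then integrating by parts once in $v$ (with $dV=e^{v}\,dv$ or $dV=e^{2v}\,dv$, according to the summand). Once this is done, the terms of the form $e^{2t}t^{2H}$, $e^{t}\int_0^te^{u}u^{2H}\,du$ and $e^{2t}\int_0^te^{-w}w^{2H}\,dw$ cancel in pairs, leaving precisely $e^{t}t^{2H}+\frac12 e^{2t}\int_0^te^{-z}z^{2H}\,dz-\frac12\int_0^te^{z}z^{2H}\,dz$. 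This bookkeeping of elementary integrals is the most laborious step of the proof; nothing deep is involved, but some care is needed to track the signs.

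For part~2, when $H\in(1/2,1)$ the exponential function $f(s)=e^{s}$ is differentiable (hence bounded and sufficiently smooth) on $[0,t]$, so formula \eqref{covarvar} applies with this $f$ and yields \eqref{covarnevar1} at once; putting $T_1=T_2=t$ gives \eqref{eq:varbase}. As a consistency check --- not required for the statement but worth recording --- one can verify that \eqref{eq:varbase} equals \eqref{varnevar1}: reduce \eqref{eq:varbase} to the region $u\le v$, substitute $w=v-u$, integrate by parts once in $v$ and once more to lower the exponent from $2H-2$ to $2H-1$; the boundary contributions $He^{t}t^{2H-1}$ cancel and one is left with $He^{2t}\int_0^te^{-w}w^{2H-1}\,dw+H\int_0^te^{w}w^{2H-1}\,dw$, which is exactly what \eqref{varnevar1} reduces to after integrating each of its two exponential integrals by parts once.
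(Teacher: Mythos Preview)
Your proposal is correct and follows essentially the same route as the paper: expand $X^H_tX^H_s$ via representation~\eqref{repres} and Fubini to get~\eqref{eq:RXH<1/2base}, specialize to $s=t$ and simplify the resulting integrals to reach~\eqref{varnevar1}, and invoke~\eqref{covarvar} directly for part~2. The only cosmetic difference is that for the double integral $\int_0^t e^{2v}\int_0^v e^{-w}w^{2H}\,dw\,dv$ you integrate by parts in $v$ while the paper swaps the order of integration---both one-line computations yielding the same expression $\tfrac12 e^{2t}\int_0^t e^{-w}w^{2H}\,dw-\tfrac12\int_0^t e^{w}w^{2H}\,dw$; your added consistency check between~\eqref{eq:varbase} and~\eqref{varnevar1} is correct but not carried out in the paper.
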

\begin{proof} Equality \eqref{eq:RXH<1/2base} follows directly from the representation \eqref{repres}, while \eqref{covarnevar1} and \eqref{eq:varbase} are direct consequences of~\eqref{covarvar}. Therefore, it is necessary to prove only \eqref{varnevar1}. However, for $0<H<1/2$
\begin{align*}
\mathsf{V}(H, t) &= e^{2t}t^{2H}-e^t\int_0^t e^v(t^{2H}+v^{2H}-(t-v)^{2H})dv\\
&\quad+\frac12 \int_0^t \!\!\int_0^t e^{v+u}(u^{2H}+v^{2H}-|v-u|^{2H})dudv\\
&=e^{2t}t^{2H}-e^t t^{2H}(e^t-1)-e^t\int_0^t e^v v^{2H}dv
+e^{2t}\int_0^t e^{-v} v^{2H} dv
\\
&\quad+(e^t -1)\int_0^t e^{u} u^{2H} du
-\frac12 \int_0^t e^v\int_0^v e^{u}(v-u)^{2H})dudv
\\
&\quad-\frac12 \int_0^t e^v\int_v^t e^{u}(u-v)^{2H}dudv\\
&=e^{t}t^{2H}+e^{2t}\int_0^t e^{-v} v^{2H} dv
-\int_0^t e^{u} u^{2H} du
\\
&\quad-\int_0^t e^v\int_0^v e^{u}(v-u)^{2H}dudv.
\end{align*}

Now,
\begin{align*}
\MoveEqLeft
\int_0^t e^v\int_0^v e^{u}(v-u)^{2H}dudv
=\int_0^t e^{2v}\int_0^v e^{-u}u^{2H}dudv\\
&=\int_0^t e^{-u}u^{2H}\int_u^t e^{2v}dvdu
=\frac12 e^{2t}\int_0^t e^{-u} u^{2H}du
-\frac12 \int_0^t e^{u} u^{2H}du,
\end{align*}
and the proof follows.
\end{proof}
\subsection{Continuity of the variance of EWIFG-process \texorpdfstring{$\int_0^te^s dB_s^H$ at $H=1/2$}{at H=1/2}}

Let $H=\frac12.$ Then $B^{1/2}=W $ is a standard Wiener process, and  $X_t^{1/2}=\int_0^te^sdW_s$ is a standard Wiener integral with variance $\mathsf{V}(1/2, t)=\int_0^te^{2s}ds=\frac{e^{2t}-1}{2}.$ This case is, in some sense, exceptional for formula \eqref{eq:varbase}, because it can not be applied directly to $H=\frac12$, but, if  we are interested in continuity of $\mathsf{V}(H, t)$ at the point $H=1/2$, we apply formula \eqref{varnevar1} and immediately get the following result.
\begin{lemma}\label{lem:covarassympt}  For any $t\ge 0$ variance $\mathsf{V}(H, t)$ is continuous at the point $H=1/2$.
    
\end{lemma}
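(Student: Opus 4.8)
The plan is to use formula \eqref{varnevar1} for $0<H<1$ and the explicit value $\mathsf{V}(1/2,t)=\frac{e^{2t}-1}{2}$ at $H=1/2$, and simply check that the former converges to the latter as $H\to 1/2$ from either side. The key observation is that the three terms on the right-hand side of \eqref{varnevar1}, namely $e^t t^{2H}$, $\frac12 e^{2t}\int_0^t e^{-z} z^{2H}\,dz$, and $-\frac12\int_0^t e^z z^{2H}\,dz$, each depend on $H$ only through the power $z^{2H}$ (or $t^{2H}$), and $H\mapsto z^{2H}$ is continuous for every fixed $z\ge 0$. Since $z^{2H}$ is bounded on $[0,t]$ uniformly for $H$ in a neighborhood of $1/2$ (by $\max(1,t)$, say), dominated convergence gives continuity of both integrals in $H$, and $t^{2H}$ is obviously continuous. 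Hence $\mathsf{V}(H,t)$ is continuous on $(0,1)$, and in particular $\lim_{H\to 1/2}\mathsf{V}(H,t)$ exists and equals $\mathsf{V}(1/2,t)$ computed by substituting $H=1/2$ directly into \eqref{varnevar1}.

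The remaining step is the arithmetic check that \eqref{varnevar1} evaluated at $H=1/2$ returns $\frac{e^{2t}-1}{2}$. Setting $H=1/2$ gives
\[
\mathsf{V}(1/2,t) = e^t t + \frac12 e^{2t}\int_0^t e^{-z} z\,dz - \frac12\int_0^t e^z z\,dz.
\]
Using $\int_0^t e^{-z} z\,dz = 1-(1+t)e^{-t}$ and $\int_0^t e^z z\,dz = (t-1)e^t+1$, this becomes $e^t t + \frac12 e^{2t}\bigl(1-(1+t)e^{-t}\bigr) - \frac12\bigl((t-1)e^t+1\bigr)$, and collecting the $e^{2t}$, $e^t$, and constant terms one gets $\frac12 e^{2t} - \frac12$, as required. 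Thus the one-sided limits from both $(0,1/2)$ and $(1/2,1)$ agree with the actual value, which is the definition of continuity at $H=1/2$.

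I do not expect any genuine obstacle here: the statement is essentially a corollary of the uniform-in-$H$ formula \eqref{varnevar1} of Lemma \ref{lem:covar}, whose whole point was to provide a single expression valid for all $H\in(0,1)$ and amenable to taking limits. The only mild care needed is to note that \eqref{eq:varbase} cannot be used for this purpose (the factor $H(2H-1)$ vanishes at $H=1/2$ while the double integral diverges), which is exactly why one appeals to \eqref{varnevar1} instead; this is already flagged in the text preceding the lemma. One could also phrase the argument more economically by just observing that $\mathsf{V}(H,t)$ as given by \eqref{varnevar1} is a composition and integral of functions jointly continuous in $(H,z)$, hence continuous in $H$ for fixed $t$, so that no separate limit computation is strictly necessary beyond identifying the value at $H=1/2$ with the known Wiener-integral variance.
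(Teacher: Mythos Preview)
Your proof is correct and follows essentially the same route as the paper: both use the uniform formula \eqref{varnevar1} (valid for all $H\in(0,1)$), pass to the limit $H\to 1/2$ term by term, and then check that the resulting expression equals $\frac{e^{2t}-1}{2}$. You are slightly more explicit than the paper in justifying the passage to the limit via dominated convergence, but otherwise the argument is the same.
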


\begin{proof}
Continuity   immediately follows from Remark \ref{remrem1}, equality \eqref {varnevar1} and the fact that for any $t\ge 0$ 
\begin{align*}
 \mathsf{V}(H, t) &= e^{t}t^{2H}+\frac{1}{2}e^{2t}\int_0^te^{-z}    z^{2H}dz
  -\frac{1}{2} \int_0^t e^{z} z^{2H}dz\\
  &\to e^{t}t+\frac{1}{2}e^{2t}\int_0^te^{-z}zdz-\frac{1}{2} \int_0^t e^{z} zdz\\
  &=e^tt+\frac12e^{2t}(1-e^{-t})-\frac12e^{t}t-\frac12(1-e^{t}+te^t)=\frac{e^{2t}-1}{2}, \quad H\to \frac12.
\end{align*}
\end{proof}

\subsection{Extension of EWIFG-process \texorpdfstring{$\int_0^te^s dB_s^H$}{} and its square characteristics to the cases \texorpdfstring{$H=1$ and $H=0$}{H=1 and H=0}}\label{sec:H=0}
Now our aim is to extend   $X^H=\{X_t^H,\ t\ge 0\}$, $H\in( 0,1)$ to the cases $H=1$ and $H=0$. 
\subsubsection{Wiener integral w.r.t. fBm with \texorpdfstring{$H=1$}{H=1}}\label{extendH=1}  This case is comparatively simple. Indeed, if $H\to 1$, then, according to equality \eqref{covarnevar1}, 
\[
\mathsf{R}_{X^H}(t, s)\to \int_0^t\!\!\int_0^se^{u+v} dudv=(e^{t}-1)(e^{s}-1),\,\, t, s\ge 0.
\]
Consequently, finite-dimensional distributions of $X^H_t$ weakly converge to finite-dimensional distributions of the Gaussian process $X^1_t=\xi\int_0^te^{u} du=\xi (e^{t}-1),\, t \ge 0,$ where $\xi$ is a standard normal variable. Obviously, the variance of $X^1_t$ equals 
\begin{gather}\label{eq:varH=1}
   \mathsf{V}(1, t):=(e^{t}-1)^2. 
\end{gather}
This way to find what is $X^1_t$ can be characterized as indirect, but going directly and substituting $B^1$ into formula \eqref{eq:Xt}, we get   $X^1_t=\int_0^t e^u dB^1_u=\xi\int_0^t e^u du=\xi (e^{t}-1)$, $t\ge 0$,   receiving the same result. 

\subsubsection{Wiener integral w.r.t. fBm with $H=0$}\label{extendH=0} This case is more involved. As it was mentioned in Remark \ref{rem-1}, for $H=0$, $B_t^0=\frac{\xi_t-\xi_0}{\sqrt{2}}$, where $\xi_t$ is a white noise. Substituting it formally into representation \eqref{repres}, we get the equality
\begin{align*}
 X_t&=e^t\frac{\xi_t-\xi_0}{\sqrt{2}}
 -\int_0^te^s\frac{\xi_s-\xi_0}{\sqrt{2}}ds
 =\frac{1}{\sqrt{2}}\left(
 e^t\xi_t-e^t\xi_0-\int_0^te^s\xi_s ds+\xi_0(e^t-1)
 \right)\\
 &=\frac{1}{\sqrt{2}}\left(
 e^t\xi_t-\int_0^te^s\xi_s ds-\xi_0 \right),
\end{align*}
which is unfortunately meaningless. Indeed, one can not even guarantee the measurability of trajectories of $\xi_s$ w.r.t. $s\ge0$, and moreover, trying to calculate  the variance of the integral $\int_0^te^s\xi_s ds$, we get an absurd result
\[
 \mathsf{E} \left(\int_0^te^s\xi_s ds\right)^2=\int_0^t\!\!\int_0^t\mathsf{E}\xi_u\xi_vdudv=0.
\]

Therefore we go by indirect way and  find the limit of covariance function $\mathsf{R}_{X^H}(t, s)$ as $H\downarrow 0$. First, we prove the following auxiliary result. 

\begin{lemma}
Let the function  $\mathsf{R}^0(t, s)$ equal 
    \begin{gather}
     \mathsf{R}^0(t, s)=\begin{cases}
         0, & t\wedge s=0,\\
         \frac{1}{2}, & t, s>0,\ t\neq s,\\
         \frac{1}{2}\left(1+e^{2t}\right), & t=s>0.
     \end{cases}   \label{eq:R0}
    \end{gather}
Then $\mathsf{R}^0(t, s), t,s\ge 0$ is  a covariance function , i.e., it is symmetric and positive definite.
\end{lemma}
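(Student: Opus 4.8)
The plan is to check, in turn, the two defining properties of a covariance function; only positive definiteness requires any work. Symmetry of $\mathsf{R}^0$ is immediate from \eqref{eq:R0}: each of the three prescribed values is unchanged under interchanging $t$ and $s$ (on the diagonal $t=s$ one has $e^{2t}=e^{2s}$), and each of the three defining conditions ($t\wedge s=0$; $t,s>0$ with $t\neq s$; $t=s>0$) is itself symmetric in $(t,s)$.

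For positive definiteness I would fix an arbitrary finite collection of pairwise distinct points $t_1,\dots,t_n\ge 0$ and reals $a_1,\dots,a_n$, and show that $\sum_{i,j=1}^n a_ia_j\mathsf{R}^0(t_i,t_j)\ge 0$; it suffices to treat pairwise distinct arguments, since repeated ones may be merged (replacing the two coefficients by their sum) without changing the quadratic form. The first step is to dispose of a possible zero node: if $t_k=0$ for some $k$, then by \eqref{eq:R0} $\mathsf{R}^0(t_k,t_j)=0$ for every $j$, so the $k$-th index contributes nothing to the sum and can be deleted; we may therefore assume all $t_i>0$. In that case \eqref{eq:R0} gives $\mathsf{R}^0(t_i,t_j)=\tfrac12$ for $i\neq j$ and $\mathsf{R}^0(t_i,t_i)=\tfrac12\bigl(1+e^{2t_i}\bigr)$, so that the matrix $\bigl(\mathsf{R}^0(t_i,t_j)\bigr)_{i,j=1}^{n}$ equals $\tfrac12 J+\tfrac12\operatorname{diag}\bigl(e^{2t_1},\dots,e^{2t_n}\bigr)$ with $J$ the all-ones matrix, whence
\[
\sum_{i,j=1}^n a_ia_j\,\mathsf{R}^0(t_i,t_j)
=\frac12\Bigl(\sum_{i=1}^n a_i\Bigr)^{\!2}+\frac12\sum_{i=1}^n a_i^2\,e^{2t_i}\ \ge\ 0,
\]
which is the claim.

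It is worth recording the probabilistic content of this computation, which both motivates the formula \eqref{eq:R0} and furnishes a one-line alternative argument: let $\eta$ be a standard normal variable and $\{\xi_t\}_{t>0}$ an independent white noise (a family of i.i.d.\ standard normals, as in Remark~\ref{rem-1}), and put $Y_t=\tfrac1{\sqrt2}\bigl(\eta+e^{t}\xi_t\bigr)\mathbbm{1}_{\{t>0\}}$, $t\ge 0$. A direct computation gives $\mathsf{E} Y_tY_s=\tfrac12\mathbbm{1}_{\{t>0,\,s>0\}}+\tfrac12 e^{t+s}\mathbbm{1}_{\{t=s>0\}}$, which is exactly $\mathsf{R}^0(t,s)$ from \eqref{eq:R0}; hence $\mathsf{R}^0$ is a covariance function.

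There is no genuine obstacle in this lemma; the only point that needs a little care is the bookkeeping at the boundary $t\wedge s=0$, where $\mathsf{R}^0$ is forced to take the value $0$ rather than $\tfrac12$. This is exactly what the ``delete the zero node'' reduction handles, ensuring that the degenerate boundary value neither breaks symmetry nor spoils positivity.
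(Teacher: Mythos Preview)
Your argument is correct and follows essentially the same route as the paper: both reduce to strictly positive, distinct nodes and then exhibit the quadratic form as $\tfrac12\bigl(\sum_i a_i\bigr)^2+\tfrac12\sum_i a_i^2 e^{2t_i}\ge 0$. Your additional probabilistic construction of a process $Y_t$ with covariance $\mathsf{R}^0$ is a nice bonus not present in the paper's proof.
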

\begin{proof} Symmetry is evident. 
Now, let $k>1,\ 0<t_1<\ldots<t_k,\ z_i \in\mathbb{R}, 1\le i\le k$.
Then
\begin{gather*}
    \sum_{i,j=1}^k R^0(t_i,t_j)z_iz_j
    =\frac12\sum_{i,j=1}^k z_iz_j
    +\frac12\sum_{i=1}^k e^{2t_i}z_i
    =\frac12\left(\sum_{i=1}^k z_i\right)^2
    + \frac12\sum_{i=1}^k e^{2t_i}z_i^2\ge 0.
\end{gather*}

The cases when some of $t_i$ are equal or their minimum equals  zero, are considered similarly.
\end{proof}

\begin{corollary}
    There exists a zero mean Gaussian process
    $\{X^0_t,\ t\ge 0\}$ with covariance function $\mathsf{R}^0(t, s)$, defined by formula \eqref{eq:R0}, i.e., such that
    $\mathsf{E} X_t^0 X_s^0=\mathsf{R}^0(t,s),\ t, s\ge 0$.
\end{corollary}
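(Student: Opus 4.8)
This is a standard existence statement, and I see two routes; I will sketch both, since the second one is what I would actually put in the paper.

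\emph{Abstract route.} The plan is to invoke the Kolmogorov existence theorem for Gaussian processes: any symmetric, positive semi-definite kernel on $[0,\infty)^2$ is the covariance function of some centered Gaussian process indexed by $[0,\infty)$. Concretely, for each finite tuple $0\le t_1<\dots<t_k$ one declares the law of $(X_{t_1}^0,\dots,X_{t_k}^0)$ to be the centered Gaussian law on $\mathbb{R}^k$ with covariance matrix $\bigl(\mathsf{R}^0(t_i,t_j)\bigr)_{i,j=1}^k$, which is a legitimate covariance matrix precisely because of the preceding lemma. The Kolmogorov consistency conditions (permutation invariance and marginalization) hold automatically for this family of Gaussian laws, so the theorem supplies a process $\{X_t^0,\ t\ge0\}$ on some probability space with $\mathsf{E} X_t^0=0$ and $\mathsf{E} X_t^0 X_s^0=\mathsf{R}^0(t,s)$. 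There is no genuine obstacle here: the only nontrivial hypothesis, positive (semi-)definiteness of $\mathsf{R}^0$, has just been verified in the lemma immediately above.

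\emph{Explicit route.} Alternatively I would exhibit the process directly, which is self-contained and also clarifies the structure of $X^0$. Let $\eta$ be a standard normal variable and let $\{\zeta_t,\ t>0\}$ be a white noise independent of $\eta$, all defined on one probability space, and set
\begin{gather*}
X_0^0:=0,\qquad X_t^0:=\frac{1}{\sqrt2}\bigl(\eta+e^{t}\zeta_t\bigr),\quad t>0.
\end{gather*}
This process is centered Gaussian by construction, and a direct second-moment computation gives $\mathsf{E}(X_t^0)^2=\tfrac12(1+e^{2t})$ for $t>0$; for $0<s\neq t$ the white-noise contributions are uncorrelated and orthogonal to $\eta$, so $\mathsf{E} X_t^0 X_s^0=\tfrac12$; and $\mathsf{E} X_0^0 X_t^0=0$. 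Hence the covariance of this process equals $\mathsf{R}^0$ of \eqref{eq:R0}.

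In either approach the argument is essentially routine; the only point requiring a little care is bookkeeping of which terms in the product $X_t^0 X_s^0$ survive the expectation (explicit route), respectively checking that the kernel-to-matrix passage preserves positive definiteness (abstract route, already handled by the preceding lemma). So I do not anticipate a real difficulty, and I would present the explicit construction as the cleaner option.
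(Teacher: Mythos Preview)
Your proposal is correct. The paper gives no proof at all for this corollary, treating it as an immediate consequence of the preceding lemma (positive definiteness of $\mathsf{R}^0$) together with the standard existence theorem for Gaussian processes; this is exactly your abstract route.

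Your explicit construction $X_0^0=0$, $X_t^0=\tfrac{1}{\sqrt2}(\eta+e^t\zeta_t)$ for $t>0$ is a genuine addition over what the paper does: it is not in the paper, but it is correct (the covariance computation checks out line by line) and it nicely mirrors the paper's description of $B_t^0=\tfrac{1}{\sqrt2}(\xi_t-\xi_0)$ in Remark~\ref{rem-1}, making transparent why the diagonal picks up the extra $e^{2t}$ term while off-diagonal entries collapse to $\tfrac12$. The only mild caveat is that the existence of an uncountable white noise $\{\zeta_t\}_{t>0}$ on one probability space itself rests on a product-measure or Kolmogorov-type argument, so the explicit route is not strictly more elementary than the abstract one; but since the paper already freely uses such a white noise in Section~\ref{sec:H=0}, this is entirely in keeping with the ambient level of rigor.
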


\begin{theorem}\label{theorem:R0}
For any $0<H<\frac12$, consider the process
$X^H=\{X_t^H,\ t\ge 0\}$ defined by \eqref{repres}.
As $H\downarrow0$, the finite-dimensional distributions of $X^H$ converge weakly to those of a Gaussian process
$X^0=\{X_t^0,\ t\ge 0\}$ with zero mean and covariance function
$\mathsf{R}^0(t, s)$, given by \eqref{eq:R0}. This means, in particular, that is natural to extend
$X^H_t,\ 0<H<1/2$ to $H=0$, by assigning $X^H\big|_{H=0}$ the value $X^0$. In other words, $X^0$ with covariance function $\mathsf{R}^0(t, s)$ and zero mean is the extension of $X^H$ to the case $H=0.$
\end{theorem}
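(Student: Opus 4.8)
The plan is to reduce the claim to the convergence of covariance functions. For each fixed $H\in(0,1/2)$ and any time points $0\le t_1<\dots<t_k$, the vector $(X^H_{t_1},\dots,X^H_{t_k})$ is centered Gaussian with covariance matrix $\Sigma^H=\big(\mathsf{R}_{X^H}(t_i,t_j)\big)_{i,j=1}^k$, while the prospective limit vector $(X^0_{t_1},\dots,X^0_{t_k})$ is centered Gaussian with covariance matrix $\Sigma^0=\big(\mathsf{R}^0(t_i,t_j)\big)_{i,j=1}^k$, which is a genuine (positive semidefinite) covariance matrix by the preceding lemma. Hence it suffices to prove that
\[
\lim_{H\downarrow 0}\mathsf{R}_{X^H}(t,s)=\mathsf{R}^0(t,s)\qquad\text{for all }t,s\ge 0 ;
\]
entrywise convergence $\Sigma^H\to\Sigma^0$ then forces the Gaussian characteristic functions $z\mapsto\exp\big(-\tfrac12 z^{\top}\Sigma^H z\big)$ to converge pointwise to the continuous characteristic function of $N(0,\Sigma^0)$, and L\'{e}vy's continuity theorem yields weak convergence of the finite-dimensional distributions.

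To identify the limit of the covariance I would start from the explicit formula \eqref{eq:RXH<1/2base} and pass to the limit in its four summands separately. The only elementary facts needed are that $x^{2H}\to 1$ as $H\downarrow 0$ for every fixed $x>0$, that $0^{2H}=0$ for every $H>0$, and the uniform bound $x^{2H}\le 1+x$, valid for all $x\ge 0$ and all $H\in(0,1/2)$. The latter makes the integrands in the three integral terms of \eqref{eq:RXH<1/2base} bounded on their compact rectangles uniformly in $H$, so dominated convergence applies; since in each of those integrands the $H$-dependent factor of the form $a^{2H}+b^{2H}-|a-b|^{2H}$ converges to $1$ for Lebesgue-almost every point (the exceptional set, made of diagonals and axes, having measure zero), the single integrals converge to $\int_0^{t}e^u\,du=e^t-1$ and to $e^s-1$, and the double integral converges to $\int_0^t\!\!\int_0^s e^{u+v}\,du\,dv=(e^t-1)(e^s-1)$. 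The first, non-integral summand is handled directly: for fixed $t,s>0$ one has $t^{2H}+s^{2H}-|t-s|^{2H}\to 1$ if $t\ne s$ and $\to 2$ if $t=s$, the diagonal being exceptional precisely because there $|t-s|^{2H}=0$ does not tend to $1$.

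It remains to add the four limits. For $t,s>0$ with $t\ne s$ the contribution is $\tfrac12 e^{t+s}-\tfrac12 e^{s}(e^t-1)-\tfrac12 e^{t}(e^s-1)+\tfrac12(e^t-1)(e^s-1)$, in which all exponential terms cancel and only $\tfrac12$ survives; for $t=s>0$ the sole change is that the first summand contributes $e^{2t}$ rather than $\tfrac12 e^{2t}$, which produces the extra term $\tfrac12 e^{2t}$ and the total $\tfrac12(1+e^{2t})$; and when $t\wedge s=0$, say $s=0$, every summand of \eqref{eq:RXH<1/2base} already vanishes identically for each $H$ (the relevant integrands being of the form $e^u(u^{2H}-u^{2H})\equiv 0$, and the integrals over a degenerate interval being $0$), so $\mathsf{R}_{X^H}(t,0)\equiv 0$. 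In all three regimes the limit coincides with $\mathsf{R}^0(t,s)$ as given by \eqref{eq:R0}, which completes the argument. I do not anticipate any genuine difficulty here: the only points deserving care are the separate treatment of the diagonal $t=s$, where the failure of $|t-s|^{2H}\to 1$ is exactly what yields the extra $\tfrac12 e^{2t}$, and the explicit bookkeeping of the cancellations among the off-diagonal exponentials.
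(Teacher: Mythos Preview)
Your proposal is correct and follows essentially the same route as the paper: both reduce the claim to pointwise convergence of the covariance and then let $H\downarrow 0$ term by term in formula \eqref{eq:RXH<1/2base}, treating the diagonal $t=s$ and the boundary $t\wedge s=0$ separately. Your write-up is somewhat more explicit than the paper's in justifying the interchange of limit and integral (via a uniform majorant and dominated convergence) and in spelling out the reduction to covariances through characteristic functions, but the substance is the same.
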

\begin{proof}
It is sufficient to establish the point-wise convergence of covariance functions. Letting $H\downarrow 0$ in \eqref{eq:RXH<1/2base}, we get that for any $t, s>0$, $t\neq s$,
\begin{align*}
  \mathsf{R}_{X^H}(t, s)
  &\to \frac12e^{t+s}-\frac{e^{s}}{2}\int_0^t e^{u} du
  -\frac{e^{t}}{2}\int_0^s e^{v} dv
  +\frac{1}{2}\int_0^t e^{u} du \int_0^s e^{v} dv
   =\frac{1}{2}.
\end{align*}

Let $t=s>0$. Then
\begin{align*}
    \mathsf{V}(H, t)&\to e^{2t}-e^t \int_0^t e^{u} du
    + \frac12\left(\int_0^t e^{u} du\right)^2
    \\
    &=e^t+\frac12(e^{2t}-2e^t+1)=\frac{e^{2t}+1}{2}, 
    \quad H\downarrow 0.
\end{align*}

The case $t\wedge s=0$ is obvious, and  the proof follows.
\end{proof}
\begin{remark}
Covariance of $X^0$ has   discontinuities on the    axis. However, these discontinuities are a bit different: at any point $(t, s)$ with $t\wedge s=0$ and $t\vee s>0$ the limit equals $1/2$ and the value is zero, but at the origin there is no limit.  Variance has discontinuity at zero because the limit at zero equals 1 and value is zero.

\end{remark}

\section{Entropy of the  EWIFG-process  as the function of \texorpdfstring{$H\in[0,1]$ and $t>0$}{H and t}}\label{IBYMsecfunc}
 
Let us investigate the behavior of the variance of the process $X^H$ defined by \eqref{eq:Xt} as the function of $H\in[0, 1]$ for different $t\ge 0$. Entropy of its one-dimensional distribution, being proportional to the logarithm of the variance, follows the same monotonic properties. Therefore, when formulating statements about the monotonicity of variance, we simultaneously obtain the same properties of the monotonicity of entropy, for example, Shannon entropy.
\subsection{Entropy of the  EWIFG-process  as the function of \texorpdfstring{$H\in[1/2,1]$}{H from [1/2,1]}} We start with
the case $H\in[1/2,1]$.
As the first easy step, compare $\mathsf{V}(1/2, t)$ and
$\mathsf{V}(1, t)$.
 \begin{lemma}\label{lem:V1/2-V1}
For $H\in[1/2,1]$ variance $\mathsf{V}(H,t)$ has the following properties:
\begin{enumerate}[(i)]
\item $\mathsf{V}(1/2, 0)=\mathsf{V}(1, 0)=0$,
\item $\mathsf{V}(1/2, \log 3)=\mathsf{V}(1, \log 3)=4$,
\item $\mathsf{V}(1/2, t)>\mathsf{V}(1, t)$ for $0< t< \log 3$,
\item $\mathsf{V}(1/2, t)<\mathsf{V}(1, t)$ for $t> \log 3$.
\end{enumerate}
 \end{lemma}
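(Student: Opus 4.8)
The plan is to reduce everything to a single elementary factorization. First I would recall the two closed-form expressions already available in the excerpt: from the discussion preceding Lemma~\ref{lem:covarassympt} we have $\mathsf{V}(1/2,t)=\frac{e^{2t}-1}{2}$, and from \eqref{eq:varH=1} we have $\mathsf{V}(1,t)=(e^{t}-1)^2$. (Here the only point requiring a moment's care is that $\mathsf{V}(1/2,t)$ is \emph{not} obtained from \eqref{eq:varbase}, since $H=1/2$ is the exceptional value for that formula; one uses instead the direct computation $\int_0^t e^{2s}\,ds$ for the Wiener integral, or equivalently the limit in Lemma~\ref{lem:covarassympt}.)

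Next I would introduce the difference
\[
D(t):=\mathsf{V}(1/2,t)-\mathsf{V}(1,t)=\frac{e^{2t}-1}{2}-(e^{t}-1)^2
\]
and substitute $x=e^{t}$, so that $x\ge 1$ for $t\ge 0$ and $x$ is strictly increasing in $t$. A short computation gives
\[
D=\frac{x^{2}-1}{2}-(x-1)^{2}=(x-1)\!\left(\frac{x+1}{2}-(x-1)\right)=\frac{1}{2}\,(x-1)(3-x).
\]
Thus $D(t)=\tfrac12(e^{t}-1)(3-e^{t})$.

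From this factorization all four assertions follow by inspecting signs. Claim (i) is the case $x=1$ (i.e.\ $t=0$), where both factors vanish together with each variance; claim (ii) is the case $x=3$ (i.e.\ $t=\log 3$), where $D=0$ and one checks directly that the common value is $\mathsf{V}(1/2,\log 3)=\frac{9-1}{2}=4=(3-1)^2=\mathsf{V}(1,\log 3)$. For $0<t<\log 3$ one has $1<e^{t}<3$, so both $e^{t}-1>0$ and $3-e^{t}>0$, giving $D(t)>0$, which is (iii); for $t>\log 3$ one has $e^{t}>3$, so $e^{t}-1>0$ but $3-e^{t}<0$, giving $D(t)<0$, which is (iv). There is essentially no hard step here: the whole lemma rests on the observation that the two variances are a quadratic in $e^{t}$ whose difference factors, and the only mild subtlety—worth stating explicitly—is identifying the correct formula for $\mathsf{V}(1/2,t)$ as the boundary case not governed by \eqref{eq:varbase}.
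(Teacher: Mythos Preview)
Your proof is correct and is essentially the same as the paper's: both recall the explicit formulas $\mathsf{V}(1/2,t)=\frac{e^{2t}-1}{2}$ and $\mathsf{V}(1,t)=(e^{t}-1)^2$ and reduce the comparison to the quadratic $e^{2t}-4e^t+3$ in $e^t$, whose roots $e^t=1,3$ give the stated sign pattern. Your presentation via the factorization $D(t)=\tfrac12(e^t-1)(3-e^t)$ is just a slightly more explicit way of writing the paper's equivalent inequality $e^{2t}-4e^{t}+3<0$ for $1<e^t<3$.
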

\begin{proof}
By \eqref{eq:varH=1}, $\mathsf{V}(1, t):=(e^{t}-1)^2.$
For $H= 1/2$ we have $X_t=\int_0^t e^sdW_s$ and
$$
\mathsf{V}(1/2, t)=\int_0^t e^{2s}ds=\frac{e^{2t}-1}{2}.
$$

$(i)$--$(ii)$ The statements are easily obtained by direct substitution $t=0$ and $ t= \log 3$.

$(iii)$--$(iv)$ Inequality $\frac{e^{2t}-1}{2}> (e^{t}-1)^2$ is equivalent to the following one: 
$e^{2t}-4e^{t}+3< 0$, which is true for
$1< e^t< 3$, i.e., $0< t< \log 3$. Consequently, the opposite relationship between variances is valid for $t> \log 3$.
\end{proof}

Now we proceed with the first main result that characterizes   the behavior of the variance $\mathsf{V}(H, t)=\mathsf{E} (X_t^H)^2,\ H\in[1/2, 1]$ as the function of $H$ for different $t>0$.
\begin{theorem}\label{IBYMtheorem_2}
The behavior of $\mathsf{V}(H, t)=\mathsf{E} (X_t^H)^2,\ H\in[1/2, 1]$ is the following:
there exist two points $\tau_{1/2}>\log 3 >\tau_{1}>1$ such that
\begin{enumerate}[(a)]
    \item for any fixed $  t\in(0, \tau_1]$ it holds that  $\mathsf{V}(H, t)$ decreases in $H\in[1/2, 1]$;
    \item for any fixed $ t\in (\tau_1,  \log 3)$ there exists a value of Hurst index $1/2<H_t<1$  such that $\mathsf{V}(H, t)$ decreases in
    $H\in[1/2, H_t]$, increases in $H\in[H_t, 1]$, and
    $\mathsf{V}(1, t)<\mathsf{V}(1/2, t)$;
    \item for $t= \log 3$ there exists $1/2<H_{\log 3}<1$  such that $\mathsf{V}(\log 3, H)$ decreases in
    $H\in[1/2, H_{\log 3}]$, increases in $H\in[H_{\log 3}, 1]$, and
    $\mathsf{V}(1/2, \log 3)=\mathsf{V}(1, \log 3)$;
    \item for any fixed $ t\in(\log 3, \tau_{1/2}]$ it holds that $\mathsf{V}(H, t)$ decreases in
    $H\in[1/2, H_{t}]$, increases in $H\in[H_{t}, 1]$, and
    $\mathsf{V}(1/2, t)<\mathsf{V}(1, t)$;
    \item for $t>\tau_{1/2}$ $\mathsf{V}(H, t)$ increases in $H\in[1/2, 1]$.
\end{enumerate}
\end{theorem}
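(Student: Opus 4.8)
The plan is to reduce the whole statement to the sign structure of $\partial_H\mathsf{V}(H,t)$. I would work throughout with formula~\eqref{varnevar1}, which is valid for $H\in(0,1)$ and extends smoothly to the segment $H\in[1/2,1]$, giving (after a short integration by parts) the correct endpoint values $\mathsf{V}(1/2,t)=(e^{2t}-1)/2$ and $\mathsf{V}(1,t)=(e^{t}-1)^{2}$. The decisive first step is to show that $H\mapsto\mathsf{V}(H,t)$ is \emph{strictly convex} on $[1/2,1]$ for every $t>0$. Differentiating \eqref{varnevar1} twice in $H$ (differentiation under the integral sign is routine, the integrand being dominated near $z=0$ by $z^{2H}|\log z|^{2}$ uniformly for $H$ in compacts) gives
\[
\partial_H^{2}\mathsf{V}(H,t)=4e^{t}t^{2H}(\log t)^{2}+2\int_{0}^{t}\bigl(e^{2t-z}-e^{z}\bigr)z^{2H}(\log z)^{2}\,dz,
\]
and since $2t-z>z$ for $0<z<t$, the factor $e^{2t-z}-e^{z}$ is positive on the whole interval of integration, so $\partial_H^{2}\mathsf{V}(H,t)>0$. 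Hence, for each fixed $t$, $\mathsf{V}(\cdot,t)$ on $[1/2,1]$ is either strictly decreasing, strictly increasing, or strictly decreasing and then strictly increasing with a unique interior minimizer $H_t$; which case occurs is determined solely by the signs of the two endpoint derivatives
\[
g(t):=\partial_H\mathsf{V}(1/2,t)=2e^{t}t\log t+\int_{0}^{t}\bigl(e^{2t-z}-e^{z}\bigr)z\log z\,dz,
\]
\[
h(t):=\partial_H\mathsf{V}(1,t)=2e^{t}t^{2}\log t+\int_{0}^{t}\bigl(e^{2t-z}-e^{z}\bigr)z^{2}\log z\,dz,
\]
and moreover $g(t)<h(t)$ always, since $\partial_H\mathsf{V}(\cdot,t)$ is strictly increasing.

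The second, and technically heaviest, step analyses $g$ and $h$ as functions of $t$. On $(0,1]$ each summand is $\le 0$ — indeed $t^{k}\log t\le 0$, while on $(0,t)$ one has $e^{2t-z}-e^{z}>0$ and $z^{k}\log z<0$ — and the integral is strictly negative, so $g(t)<0$ and $h(t)<0$ there; and $g(t),h(t)\to+\infty$ as $t\to\infty$ (either from the formulas directly, or from $\mathsf{V}(H,t)\sim H\,\Gamma(2H)\,e^{2t}$, whose $H$-derivative at $H=\tfrac12$ and $H=1$ is $\Gamma'(2)=1-\gamma>0$, resp.\ $\Gamma'(3)=3-2\gamma>0$). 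The key point is the monotonic behaviour of the rescaled functions $e^{-2t}g(t)$ and $e^{-2t}h(t)$: after one integration by parts their derivatives collapse to
\[
\frac{d}{dt}\bigl(e^{-2t}g(t)\bigr)=2e^{-2t}\phi(t),\qquad \phi(t)=e^{t}\log t-\int_{0}^{t}e^{z}\log z\,dz+1,
\]
\[
\frac{d}{dt}\bigl(e^{-2t}h(t)\bigr)=2e^{-2t}\psi(t),\qquad \psi(t)=3(e^{t}-1)+2\int_{0}^{t}e^{z}\log z\,dz.
\]
Here $\phi'(t)=e^{t}/t>0$, so $\phi$ increases from $\phi(0^{+})=-\infty$ and is positive at $t=1$ (trivially, since $\int_{0}^{1}e^{z}\log z\,dz<0$); and $\psi(0)=0$ with $\psi'(t)=e^{t}(3+2\log t)$ changing sign once, so $\psi$ first decreases and then increases to $+\infty$. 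Thus each of $\phi,\psi$ is negative on an initial interval and positive afterwards, so each of $e^{-2t}g(t),e^{-2t}h(t)$ decreases and then increases; since both start from the limit $0$ at $t=0^{+}$ and tend to a positive limit, each has a \emph{unique} zero — call it $\tau_{1/2}$ for $g$ and $\tau_{1}$ for $h$ — with the function negative before it and positive after. As $g(1)<0$ and $h(1)<0$, this forces $\tau_{1/2}>1$ and $\tau_{1}>1$. The step I expect to be the main obstacle is exactly the simplification producing these clean formulas for $\tfrac{d}{dt}(e^{-2t}g)$ and $\tfrac{d}{dt}(e^{-2t}h)$: it requires several integrations by parts with extensive cancellations, and the whole strategy hinges on the limiting functions $\phi,\psi$ having only one sign change, which would be far from transparent without the right grouping of terms.

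Finally, to locate these points I would invoke Lemma~\ref{lem:V1/2-V1}(ii): since $\mathsf{V}(1/2,\log3)=\mathsf{V}(1,\log3)$ and $\mathsf{V}(\cdot,\log3)$ is strictly convex, a strictly convex function with equal endpoint values has negative derivative at the left endpoint and positive derivative at the right, so $g(\log3)<0$ and $h(\log3)>0$; hence $\tau_{1}<\log3<\tau_{1/2}$, completing the chain $1<\tau_{1}<\log3<\tau_{1/2}$. The five cases then follow mechanically from the convexity dichotomy of the first step. For $t\le\tau_{1}$ we have $h(t)\le 0$, hence $\partial_H\mathsf{V}(\cdot,t)\le 0$ on $[1/2,1]$ and $\mathsf{V}(\cdot,t)$ is decreasing — case (a); for $t>\tau_{1/2}$ we have $g(t)>0$, hence $\mathsf{V}(\cdot,t)$ is increasing — case (e). For $\tau_{1}<t<\tau_{1/2}$ we have $g(t)<0<h(t)$, so $\partial_H\mathsf{V}(\cdot,t)$ has a unique zero $H_t\in(1/2,1)$ and $\mathsf{V}(\cdot,t)$ is strictly decreasing then increasing (with $H_t\to1$ as $t\downarrow\tau_{1}$ and $H_t\to1/2$ as $t\uparrow\tau_{1/2}$, so the boundary values $t=\tau_{1},\tau_{1/2}$ merge continuously with (a) and (d)/(e)); the extra comparisons $\mathsf{V}(1/2,t)\gtrless\mathsf{V}(1,t)$ according as $t\lessgtr\log3$ are exactly Lemma~\ref{lem:V1/2-V1}(iii)–(iv) and split this middle range into cases (b), (c), (d). The precise numerical values of $\tau_{1}$ and $\tau_{1/2}$ are then obtained by solving the transcendental equations $h(t)=0$ and $g(t)=0$.
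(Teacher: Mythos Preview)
Your proof is correct and follows the same architecture as the paper's: strict convexity of $H\mapsto\mathsf V(H,t)$ from \eqref{secder}, reduction to the signs of the two endpoint derivatives $g(t)=\partial_H\mathsf V(1/2,t)$ and $h(t)=\partial_H\mathsf V(1,t)$, and then a monotonicity analysis of $g,h$ in $t$ together with Lemma~\ref{lem:V1/2-V1} to place $\log 3$ between $\tau_1$ and $\tau_{1/2}$.

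The genuine difference is in how you establish the single sign change of $g$ and $h$. The paper works with the mixed partial $\partial_t\partial_H\mathsf V$ directly: it writes, e.g., $\partial_t h(t)=2e^{2t}\bigl(e^{-t}t^{2}\log t+2e^{-t}t\log t+e^{-t}t+\int_0^t e^{-z}z^{2}\log z\,dz\bigr)$, checks the bracket is positive at $t=1$, and shows its $t$-derivative is positive for $t\ge1$, so $h$ is increasing on $[1,\infty)$; combined with $h\le0$ on $(0,1]$ and $h(t)\to+\infty$ (via Lemma~\ref{lemlem1}) this yields the unique zero $\tau_1$. You instead rescale first and study $e^{-2t}g,\,e^{-2t}h$, whose derivatives after integration by parts collapse to $2e^{-2t}\phi(t)$ and $2e^{-2t}\psi(t)$ with the very clean formulas $\phi(t)=e^{t}\log t-\int_0^t e^{z}\log z\,dz+1$ and $\psi(t)=3(e^{t}-1)+2\int_0^t e^{z}\log z\,dz$; their single sign change is then immediate from $\phi'(t)=e^{t}/t$ and $\psi'(t)=e^{t}(3+2\log t)$. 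Your rescaling buys noticeably simpler auxiliary functions and a one-line monotonicity argument, at the cost of the extra integrations by parts needed to reach $\phi,\psi$ (which you correctly flag as the main computational step; I checked them and they are right). Your placement of $\log 3$ via ``strictly convex with equal endpoint values $\Rightarrow$ $g(\log3)<0<h(\log3)$'' is also a bit slicker than the paper's contrapositive argument, though logically equivalent.
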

\begin{proof}
Let $H\in[1/2, 1],\,t>0$. Consider  the representation of the variance 
\begin{gather*}
    \mathsf{V}(H, t)= e^{t}t^{2H}
    +\frac12\int_0^t\left(e^{2t-z}-e^{z}\right)z^{2H}dz,
\end{gather*}
and note  that $e^{2t-z}-e^{z}\ge 0$ for $0\le z\le t$.
Consider the derivative of the variance in $H$:
\begin{gather}\label{deriv}
    \frac{\partial \mathsf{V}(H, t)}{\partial H}
    =2e^{t}t^{2H}\log t
    +\int_0^t\left(e^{2t-z}-e^{z}\right)z^{2H}\log{z}\,dz.
\end{gather}
Moreover,
\begin{gather}\label{secder}
    \frac{\partial^2 \mathsf{V}(H, t)}{\partial^2 H}
    =4e^{t}t^{2H}\log^2 t
    +2\int_0^t\left(e^{2t-z}-e^{z}\right)z^{2H}\log^2{z}\,dz>0,
    \quad t>0.
\end{gather}

It obviously means that $\frac{\partial \mathsf{V}(H, t)}{\partial H}$ strictly  increases in $H\in[1/2, 1]$. Therefore, we can consider several  cases and establish that all of them are realized.

1) For some $t>0$ $\frac{\partial \mathsf{V}(1, t)}{\partial H}< 0$. Then $\frac{\partial \mathsf{V}(H, t)}{\partial H}< 0$ for all $H\in[1/2,1]$, and $\mathsf{V}(H, t)$ decreases in $H\in[1/2,1]$.

2) For some $t>0$ $\frac{\partial \mathsf{V}(1, t)}{\partial H}\ge  0$ but $\frac{\partial \mathsf{V}(1/2, t)}{\partial H}< 0$.
It means that there exists a value of Hurst index $1/2<H_t<1$  such that $\mathsf{V}(H, t)$ decreases in $H$ for $H\in[1/2,H_t]$ and increases for $H\in[H_t,1]$.

3) For some $t>0$ $\frac{\partial \mathsf{V}(1/2, t)}{\partial H}\ge   0$, then $\frac{\partial \mathsf{V}(H, t)}{\partial H}> 0$ for all $H\in[1/2,1]$.

So, let $t>0$ be fixed. Note that
\begin{equation}\label{derV1}
   \frac{\partial \mathsf{V}(1, t)}{\partial H}
   = 2e^t t^2 \log{t}
   +\int_0^t\left(e^{2t-z}-e^{z}\right)z^2 \log{z}\, dz.
\end{equation}
Obviously, $\frac{\partial \mathsf{V}(1, t)}{\partial H}< 0$ for all $t\le 1$.
Furthermore, for $t\le 1$ consider
\begin{gather}\label{reprrepr1}
   \frac{\partial^2 \mathsf{V}(1, t)}{\partial H \partial t}
   = 2e^t t^2 \log{t} +4e^t t \log{t} + 2e^t t
   +2e^{2t}\int_0^t e^{-z} z^2 \log{z}\, dz.
\end{gather}

Let $t=1$. Then
$$
\frac{\partial^2 \mathsf{V}(1, 1)}{\partial H \partial t}
=2e+2e^{2}\int_0^1 e^{-z} z^2 \log{z}\, dz
=2e\left(1+e\int_0^1 e^{-z} z^2 \log{z}\, dz\right).
$$
Note that function $f(z)=z^2\log{z}$ equals zero at 0 and 1 and achieves its minimum at the point $z=e^{-1/2}$.
This minimal value equals $-\frac{1}{2}e^{-1}$.
Therefore
$$
1+e\int_0^1 e^{-z} z^2 \log{z}\, dz
>1+e\left(-\frac{1}{2}e^{-1}\right)\left(1-e^{-1}\right)
=\frac{1}{2}+\frac{1}{2}e^{-1}>0.
$$
Moreover, we can rewrite \eqref{reprrepr1} as 
$$
\frac{\partial^2 \mathsf{V}(1, t)}{\partial H \partial t}
=2e^{2t}\left(e^{-t} t^2\log{t}
+2e^{-t} t \log{t} + e^{-t} t
+\int_0^t e^{-z} z^2 \log{z}\, dz
\right),
$$
and this representation allows to establish that
$\frac{\partial^2 \mathsf{V}(1, t)}{\partial H \partial t}$ increases in $t$, because the value in the brackets is positive at the point $t=1$, and its derivative equals
\begin{gather*}
    -e^{-t}t^2\log{t}+2te^{-t} \log{t} + e^{-t} t
    -2e^{-t}t \log{t}+2e^{-t} \log{t}
    +3e^{-t}>0,\quad t\ge 1.
\end{gather*}

Finally, we establish that
$\frac{\partial \mathsf{V}(1, t)}{\partial H}$ increases in $t$, and consequently, being negative at $t=1$, it is negative until some point $t=\tau_1$, where it equals zero and then is strictly positive. And this point $\tau_1$ exists, because
$\frac{\partial \mathsf{V}(1, t)}{\partial H}\to+\infty$ as $t\to\infty$, according to Lemma \ref{lemlem1}.  It is clear that  for any fixed $ t\in(0, \tau_1]$ function   $\mathsf{V}(H, t)$ decreases in $H\in[1/2, 1]$, and we get item (a). 

Now, consider
\begin{equation}\label{derV2}
   \frac{\partial \mathsf{V}(1/2, t)}{\partial H}
   = 2e^t t \log{t}
   +\int_0^t\left(e^{2t-z}-e^{z}\right)z \log{z}\, dz.
\end{equation}
Again, $\frac{\partial \mathsf{V}(1/2, t)}{\partial H}<0$ for $0<t\le 1$, 
\begin{gather*}
   \frac{\partial^2 \mathsf{V}(1/2, t)}{\partial H \partial t}
   = 2e^t t \log{t} +2e^t \log{t} + 2e^t
   +2e^{2t}\int_0^t e^{-z} z \log{z}\, dz.
\end{gather*}
Function $z\log z$ at the interval $[0,1$ has minimal value at point $z=e^{-1}$, and this minimal value equals $-e^{-1}$.
Therefore, at point $t=1$  
\begin{gather*}
   \frac{\partial^2 \mathsf{V}(1/2, 1)}{\partial H \partial t}
   = 2e\left(1+ e \int_0^1 e^{-z} z \log{z}\, dz
   \right)
   >2e\left(1+ e(-e^{-1}) \int_0^1 e^{-z}\, dz
   \right)>0,
\end{gather*}
and
\begin{gather*}
   \frac{\partial^2 \mathsf{V}(1/2, t)}{\partial H \partial t}
   = 2e^{2t}
   \left(e^{-t}t \log{t} +e^{-t} \log{t} + e^{-t}
   +\int_0^t e^{-z} z \log{z}\, dz
   \right),
\end{gather*}
where the value in the brackets is positive at point $t=1$, and increases in $t$, because its derivative equals
\begin{gather*}
    -e^{-t}t \log{t}+e^{-t} \log{t} + e^{-t}
    -e^{-t} \log{t}+ e^{-t} \frac{1}{t} -e^{-t}
    +e^{-t}t \log{t}=\frac{e^{-t}}{t}>0.
\end{gather*}
Again, $\lim\limits_{t\to\infty}\frac{\partial \mathsf{V}(1/2, t)}{\partial H}=+\infty$, according to Lemma \ref{lemlem1}. It means that there exists a point $1<\tau_{1/2}$ such that $\frac{\partial \mathsf{V}(1/2, t)}{\partial H}$ is negative for $0<t<\tau_{1/2}$, equals zero at $t=\tau_{1/2}$ and positive for $t>\tau_{1/2}$.
Also, $\frac{\partial \mathsf{V}(1/2, t)}{\partial H}<\frac{\partial \mathsf{V}(1, t)}{\partial H}$, therefore $\tau_1<\tau_{1/2}$.

Now, how to compare points $\tau_1$ and $\tau_{1/2}$ with  $\log 3$?   According to Lemma~\ref{lem:V1/2-V1}, $\mathsf{V}(1/2, \log 3)=\mathsf{V}(1, \log 3).$ However, we know that for all $t\le \tau_1$ $\mathsf{V}(H, t)$ decreases in $H$ whence for all $t\le \tau_1$ $\mathsf{V}(1/2, t)>\mathsf{V}(1, t)$ and consequently, $\tau_1< \log{3}$. Similarly, after point $\tau_{1/2}$ 
  $\mathsf{V}(H, t)$ increases in $H$, whence for all $t\ge \tau_{1/2}$ $\mathsf{V}(1/2, t)<\mathsf{V}(1, t)$ and consequently, $\tau_{1/2}> \log{3}$. Additionally, for any $ t\in (\tau_1,  \log 3)$ we have that $\frac{\partial \mathsf{V}(1/2, t)}{\partial H}<0$ and   $\frac{\partial \mathsf{V}(1, t)}{\partial H}>0$, therefore there exists a value of Hurst index $1/2<H_t<1$  such that $\mathsf{V}(H, t)$ decreases in
    $H\in[1/2, H_t]$ and increases in $H\in[H_t, 1]$, and
    $\mathsf{V}(1, t)<\mathsf{V}(1/2, t)$. Moreover, according to item $(iii)$ of Lemma \ref{lem:V1/2-V1}, $\mathsf{V}(1/2, t)>\mathsf{V}(1, t)$ for $0< t< \log 3$, and we get item (b). Item (c) is evident now, and items (d) and (e) are established, taking in account all previous calculations, similarly to (b).  Hence, the proof follows. 
\end{proof}
\begin{figure}[h]
\centering
\includegraphics[scale=0.8]{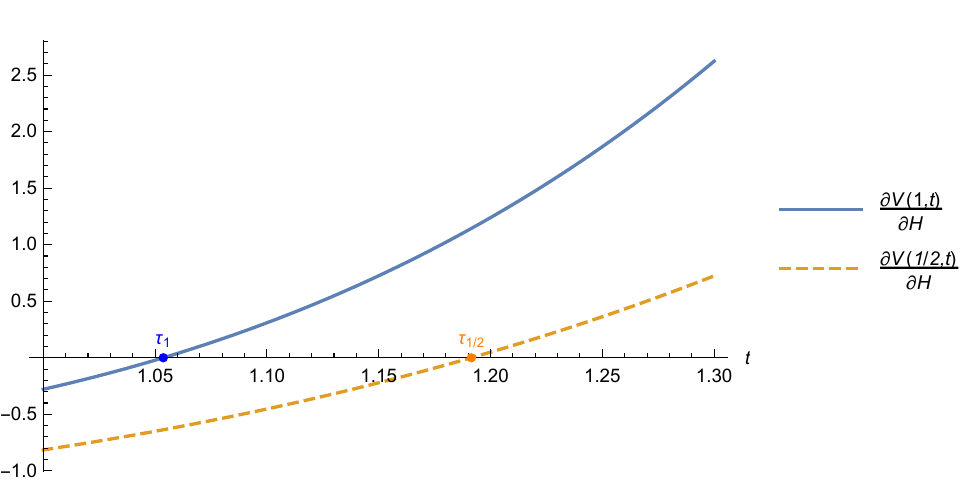}
\caption{Graphs of $\frac{\partial \mathsf{V}(1,H)}{\partial H}$ and  $\frac{\partial \mathsf{V}(1/2,H)}{\partial H}$\label{fig:tau}}
\end{figure}

\begin{remark}
The graphs of the partial derivatives $\frac{\partial \mathsf{V}(1,H)}{\partial H}$ and $\frac{\partial \mathsf{V}(1/2,H)}{\partial H}$, as defined in \eqref{derV1} and \eqref{derV2}, respectively, are shown in Figure~\ref{fig:tau}. By solving the equations $\frac{\partial \mathsf{V}(1,H)}{\partial H} = 0$ and $\frac{\partial \mathsf{V}(1/2,H)}{\partial H} = 0$ numerically, we determine that the approximate solutions are
\[
\tau_1  \approx 1.05368
\quad\text{and}\quad
\tau_{1/2} \approx 1.19142.
\]
It is worth noting that $\log 3 \approx 1.09861$, which implies that the inequalities
$\tau_{1/2}>\log 3 >\tau_{1}>1$
stated in Theorem~\ref{IBYMtheorem_2} are satisfied.
\end{remark}

\begin{figure}[h]
\centering
\subcaptionbox{$t=1$\label{fig:1a}}{\includegraphics[scale=0.55]{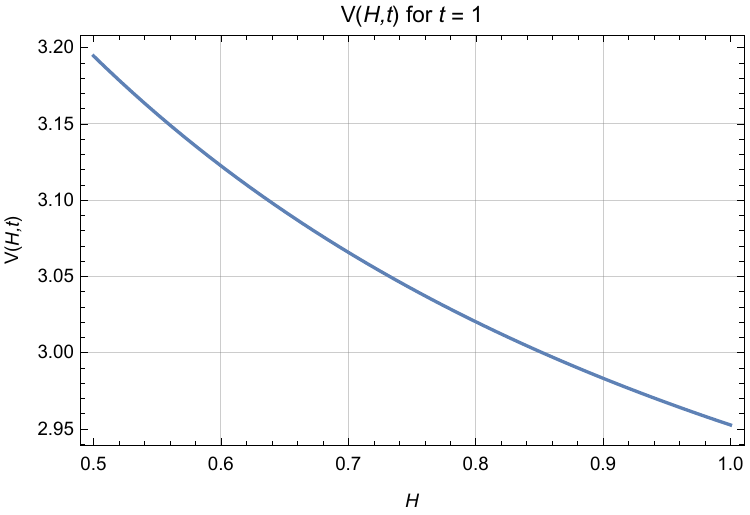}}
\qquad
\subcaptionbox{$t=1.08$ (minimum at $H_{1.08}\approx 0.8102$)\label{fig:1b}}{\includegraphics[scale=0.55]{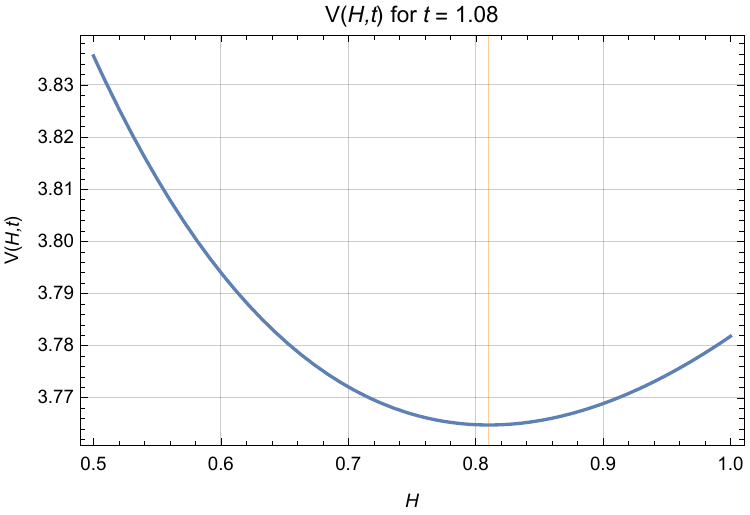}}
\\[12pt]
\subcaptionbox{$t = \log 3$ (minimum at $H_{\log3}\approx 0.7232$)\label{fig:1e}}{\includegraphics[scale=0.55]{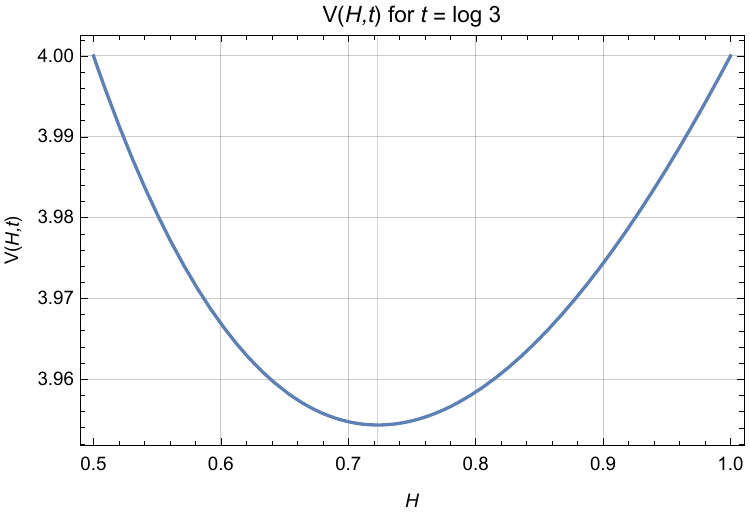}}
\qquad
\subcaptionbox{$t=1.15$ (minimum at $H_{1.15}\approx 0.5731$)\label{fig:1c}}{\includegraphics[scale=0.55]{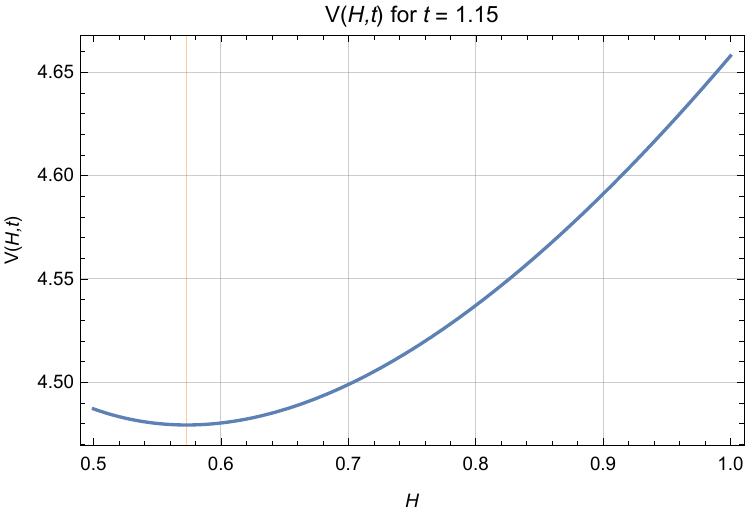}}
\\[12pt]
\subcaptionbox{$t=1.25$\label{fig:1d}}{\includegraphics[scale=0.55]{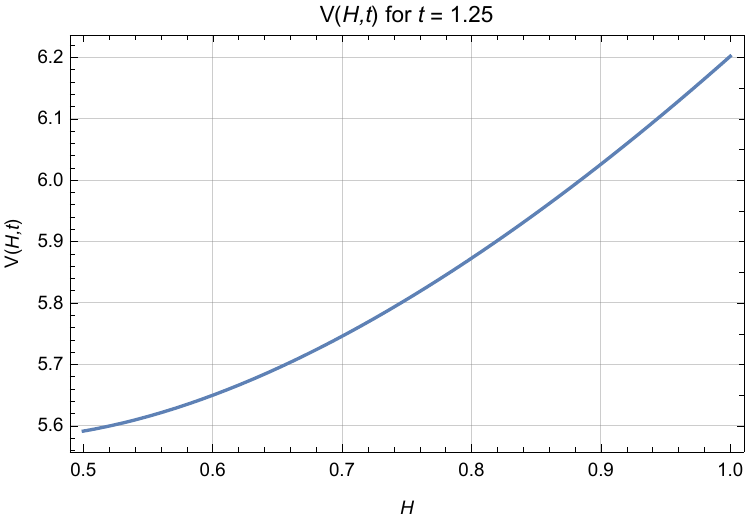}}
 \caption{Graphs of $\mathsf{V}(H,t)$ as functions of $H\in(\frac12,1)$}\label{fig:1}
\end{figure}
\begin{remark}
Figure~\ref{fig:1} presents the results of Theorem~\ref{IBYMtheorem_2}.
Figures~\ref{fig:1a}--\ref{fig:1d} display the graphs of the function $\mathsf{V}(H, t)$ at specific points $t = 1, 1.08, \log 3, 1.15$, and $1.25$, corresponding to various cases described in the theorem. The observations are as follows:
\begin{itemize}
\item For $t=1 \in (0,\tau_1)$, the function $\mathsf{V}(H,1)$ decreases w.r.t.\ $H$ in the interval $H\in[\frac12,1]$ (Figure~\ref{fig:1a}).
\item For $t=1.08 \in (\tau_1,\log 3)$, the function $\mathsf{V}(H,1.08)$ decreases as 
$H$ increases from $\frac12$ to $H_{1.08}\approx  0.8102$, and subsequently increases. Additionally, $\mathsf{V}(1, 1.08)<\mathsf{V}(1/2, 1.08)$ (Figure~\ref{fig:1b}).
    \item for $t= \log 3$, the function $\mathsf{V}(\log 3, H)$ decreases as $H$ increases from $\frac12$ to $H_{\log 3}\approx  0.7232$, and then increases. Moreover, $\mathsf{V}(1/2, \log 3)=\mathsf{V}(1, \log 3)$ (Figure~\ref{fig:1e}).
    \item For $t = 1.15 \in (\log3, \tau_{1/2})$, the function $\mathsf{V}(H, 1.15)$ decreases as $H$ increases from $\frac12$ to $H_{1.15}\approx 0.5731$, and then increases. In this case, $\mathsf{V}(1/2, 1.15) < \mathsf{V}(1, 1.15)$ (Figure~\ref{fig:1c}).
    \item for $t = 1.25 >\tau_{1/2}$, the function $\mathsf{V}(H, 1.25)$ increases w.r.t.\ $H$ in the interval $H\in[\frac12,1]$ (Figure~\ref{fig:1d}).
\end{itemize}
\end{remark}

\subsection{Entropy of the  EWIFG-process  as the function of \texorpdfstring{$H\in[0,1/2]$}{H from [0,1/2]}}
Now,  we continue  with the second  main result that characterizes   the behavior of the variance $\mathsf{V}(H, t)=\mathsf{E} (X_t^H)^2,\ H\in[0, 1/2]$ as the function of $H$ for different $t>0$. So, for any $ H\in[0, 1/2]$ consider the variance $\mathsf{V}(H, t)$ of the process $X^H$ defined by \eqref{varnevar1}. The situation at boundary points $0$ and $\frac12$ is now simpler than in the previous case, because for any $t>0$ $$\mathsf{V}(0, t)=\frac{e^{2t}+1}{2}>\frac{e^{2t}-1}{2}=\mathsf{V}(1/2, t).$$
The first derivative of the variance $\mathsf{V}(H, t)$ w.r.t. $H$, for any $t>0$  can be calculated as in \eqref{deriv} and also presented in the form 
$$
\frac{\partial \mathsf{V}(H, t)}{\partial H}
   = 2e^t t^{2H}\log{t}
   +e^{2t}\int_0^t  e^{-z}z^{2H}\log{z}\,dz
   -\int_0^t  e^{z}z^{2H}\log{z}\,dz.
$$
In particular,   
 \begin{gather*}
   \frac{\partial \mathsf{V}(0, t)}{\partial H}
   =2e^t \log{t}
   +e^{2t}\int_0^t  e^{-z}\log{z}\,dz
   -\int_0^t  e^{z}\log{z}\,dz,\\
  \frac{\partial \mathsf{V}(1/2, t)}{\partial H}
   =2e^t t\log{t}
   +e^{2t}\int_0^t  e^{-z} z\log{z}\,dz
   -\int_0^t  e^{z}z\log{z}\,dz. 
\end{gather*}
and $\frac{\partial \mathsf{V}(0, 0)}{\partial H}$ does not exist. Furthermore, according to \eqref{secder} that is also true for any $0<H<1/2$ and $t>0$, the second derivative of the variance $\mathsf{V}(H, t)$ with respect to $H$ is strictly positive: $\frac{\partial^2 \mathsf{V}(H, t)}{\partial^2 H} >0$, and consequently, $\frac{\partial \mathsf{V}(H, t)}{\partial H}$ strictly increases in $H\in(0, 1/2]$. Taking this into account, let us formulate an auxiliary result. 
\begin{remark}\label{rem-compar0-1}
 Inequality $\frac{e^{2t}+1}{2}>(e^t-1)^2$ hols for $t<\log(2+\sqrt{3})$. So, $\mathsf{V}(0, t)>\mathsf{V}(1, t)$ for $t<\log(2+\sqrt{3})$ and $\mathsf{V}(0, t)<\mathsf{V}(1, t)$ for $t>\log(2+\sqrt{3})$.
\end{remark}
In what follows, $\tau_{1/2}>1$ is the point introduced in Theorem \ref{IBYMtheorem_2}
\begin{lemma}\label{lemlem_aux}
\begin{enumerate}[(i)]
\item For all $t>0$ $\frac{\partial \mathsf{V}(0, t)}{\partial H}<0$.
\item For any fixed $ t\in(0, \tau_{1/2})$
$\frac{\partial \mathsf{V}(H, t)}{\partial H}<0$
for any $H\in[0, 1/2]$.
\item For $t=\tau_{1/2}$
$\frac{\partial \mathsf{V}(0, t)}{\partial H}<0$,
$\frac{\partial \mathsf{V}(H, t)}{\partial H}$ increases in $H$ and $\frac{\partial \mathsf{V}(1/2, t)}{\partial H}=0$.
\item For any fixed $t>\tau_{1/2}$
$\frac{\partial \mathsf{V}(0, t)}{\partial H}<0$,
$\frac{\partial \mathsf{V}(H, t)}{\partial H}$ increases in $H$, equals zero at the unique point $\widehat{H}_t$ and is strictly positive for $H\in(\widehat{H}_t, 1/2]$.
\end{enumerate}
\end{lemma}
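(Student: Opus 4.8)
\emph{Proof proposal.} The whole content of the lemma sits in item~(i): once (i) is in hand, items (ii)--(iv) come out by combining it with the strict convexity recorded in \eqref{secder} and with the sign behaviour of $\frac{\partial \mathsf{V}(1/2,t)}{\partial H}$ already established inside the proof of Theorem~\ref{IBYMtheorem_2}. So the plan is to prove (i) first, by a soft ``averaging'' argument that never touches the underlying transcendental integrals, and then to read off (ii)--(iv).

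For (i), I would fix $t>0$ and argue as follows. The explicit formula \eqref{deriv} for $\frac{\partial \mathsf{V}(H,t)}{\partial H}$ shows, via dominated convergence (the integrand there is dominated by $e^{2t}\max(1,t)\,\lvert\log z\rvert$, which is integrable on $(0,t)$), that $H\mapsto\frac{\partial \mathsf{V}(H,t)}{\partial H}$ is continuous on $[0,1/2]$; by \eqref{secder} it is moreover $C^1$ on $(0,1/2]$ with strictly positive derivative, hence strictly increasing on $[0,1/2]$. Therefore its value at the left endpoint is strictly below its mean over $[0,1/2]$, and the fundamental theorem of calculus (legitimate since $\mathsf{V}(\cdot,t)$ is continuous on $[0,1/2]$, again by \eqref{varnevar1} and dominated convergence) identifies that mean as
\[
2\int_0^{1/2}\frac{\partial \mathsf{V}(u,t)}{\partial H}\,du
=2\bigl(\mathsf{V}(1/2,t)-\mathsf{V}(0,t)\bigr)
=2\left(\frac{e^{2t}-1}{2}-\frac{e^{2t}+1}{2}\right)=-2 .
\]
Hence $\frac{\partial \mathsf{V}(0,t)}{\partial H}<-2<0$ for every $t>0$, which is exactly (i).

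For (ii)--(iv), recall from \eqref{secder} that for each fixed $t>0$ the derivative $H\mapsto\frac{\partial \mathsf{V}(H,t)}{\partial H}$ is strictly increasing on $[0,1/2]$, so over this interval it is maximal at $H=1/2$ and minimal at $H=0$; and recall from the proof of Theorem~\ref{IBYMtheorem_2} that $\frac{\partial \mathsf{V}(1/2,t)}{\partial H}$ is negative for $0<t<\tau_{1/2}$, vanishes at $t=\tau_{1/2}$, and is positive for $t>\tau_{1/2}$. If $t\in(0,\tau_{1/2})$, then $\frac{\partial \mathsf{V}(H,t)}{\partial H}\le\frac{\partial \mathsf{V}(1/2,t)}{\partial H}<0$ for all $H\in[0,1/2]$, giving (ii). If $t=\tau_{1/2}$, then (i) gives $\frac{\partial \mathsf{V}(0,t)}{\partial H}<0$, \eqref{secder} gives the monotonicity in $H$, and $\frac{\partial \mathsf{V}(1/2,t)}{\partial H}=0$, giving (iii). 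If $t>\tau_{1/2}$, then $\frac{\partial \mathsf{V}(0,t)}{\partial H}<0$ by (i) while $\frac{\partial \mathsf{V}(1/2,t)}{\partial H}>0$, so the intermediate value theorem together with the strict monotonicity in $H$ yields a unique $\widehat{H}_t\in(0,1/2)$ with $\frac{\partial \mathsf{V}(\widehat{H}_t,t)}{\partial H}=0$, the derivative being negative on $[0,\widehat{H}_t)$ and positive on $(\widehat{H}_t,1/2]$, giving (iv).

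The only delicate point is (i), and the averaging device is precisely what keeps it easy. A frontal attack would instead set $h(t):=e^{-2t}\frac{\partial \mathsf{V}(0,t)}{\partial H}$, note that $h(0^+)=-\infty$ and $h(t)\to-\gamma$ as $t\to\infty$, and reduce the sign of $h'$ to that of $p(t):=\frac{e^{t}}{t}-\operatorname{Ei}(t)+\gamma$, which is strictly decreasing from $+\infty$ to $-\infty$; this makes $h$ unimodal, but one would still have to bound $h$ at its (unique interior) maximum, and that last estimate is the awkward part. Exploiting instead the already available endpoint values $\mathsf{V}(0,t)$, $\mathsf{V}(1/2,t)$ and the convexity in \eqref{secder} removes this obstacle entirely; the only things left to check are the continuity of $\mathsf{V}(\cdot,t)$ and of $\frac{\partial \mathsf{V}(\cdot,t)}{\partial H}$ at $H=0$ and the applicability of the fundamental theorem of calculus, all routine consequences of \eqref{varnevar1}, \eqref{secder} and dominated convergence.
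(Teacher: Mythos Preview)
Your proof is correct, and the argument for (i) is genuinely different from---and cleaner than---the paper's. The paper attacks (i) head-on: it writes $\psi(t):=\frac{\partial \mathsf{V}(0,t)}{\partial H}$, observes $\psi(t)<0$ for $0<t\le 1$ directly from \eqref{deriv}, uses Lemma~\ref{lemlem1} to see $\psi(t)\to-\infty$ as $t\to\infty$, and then rules out any interior maximum on $[1,\infty)$ by showing that the auxiliary function $\varphi(t)=e^{-t}\log t+\frac{e^{-t}}{t}+\int_0^t e^{-z}\log z\,dz$ (which must vanish at any such maximum) is strictly negative for $t\ge 1$. Your averaging device sidesteps all of this: the strict convexity in $H$ from \eqref{secder} forces $\frac{\partial \mathsf{V}(0,t)}{\partial H}$ below the mean of $\frac{\partial \mathsf{V}(\cdot,t)}{\partial H}$ over $[0,1/2]$, and the fundamental theorem of calculus identifies that mean with $2\bigl(\mathsf{V}(1/2,t)-\mathsf{V}(0,t)\bigr)=-2$, using only the already computed endpoint variances. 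This not only avoids Lemma~\ref{lemlem1} and the $\varphi$-analysis entirely but also yields the uniform quantitative bound $\frac{\partial \mathsf{V}(0,t)}{\partial H}<-2$, which the paper does not obtain. Your handling of (ii)--(iv) coincides with the paper's: both simply combine (i), the strict monotonicity from \eqref{secder}, and the sign of $\frac{\partial \mathsf{V}(1/2,t)}{\partial H}$ established in the proof of Theorem~\ref{IBYMtheorem_2}. The only housekeeping you flag---continuity of $\mathsf{V}(\cdot,t)$ and of its $H$-derivative at $H=0$, needed to invoke the fundamental theorem of calculus---is indeed routine from \eqref{varnevar1} and dominated convergence, and is consistent with the limit computed in Theorem~\ref{theorem:R0}.
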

\begin{proof} $(i)$
Recall that for $t>0$
\begin{align*}
    \psi(t)&\coloneqq\frac{\partial \mathsf{V}(0, t)}{\partial H}=2e^t \log{t}
   +e^{2t}\int_0^t  e^{-z}\log{z}\,dz
   -\int_0^t  e^{z}\log{z}\,dz\\
   &=2e^t \log{t}+\int_0^t  (e^{2t-z}-e^{z})\log{z}\,dz
\end{align*}
The latter equality implies that for $0<t\le 1$
$\frac{\partial \mathsf{V}(0, t)}{\partial H}<0$. So, it is sufficient to consider $t>1$.

It follows from Lemma \ref{lemlem1} that $\int_0^{t}e^{-z}\log{z}\,dz$ being increasing in $t>1$, is strictly negative. Therefore,
$\lim_{t\to\infty}\psi(t)=-\infty$.
Assume that $\psi(t)$ achieves its maximum at some point $t_0$. Then  
\begin{gather*}
    \psi'(t_0)=2e^{t_0}\log{t_0}+\frac{2e^{t_0}}{t_0}
    +2e^{2t_0}\int_0^{t_0}e^{-z}\log{z}\,dz=0,
\end{gather*}
whence
$$
e^{-t_0}\log{t_0}+\frac{e^{-t_0}}{t_0}
+\int_0^{t_0}e^{-z}\log{z}\,dz=0.
$$

Consider the function
\begin{gather*}
    \varphi(t)=e^{-t}\log{t}+\frac{e^{-t}}{t}
+\int_0^{t}e^{-z}\log{z}\,dz,\ t\ge 1.
\end{gather*}
It has the properties:
\begin{gather*}
    \varphi(1)=e^{-1}+\int_0^{1}e^{-z}\log{z}\,dz
    <e^{-1}-e^{-1}\int_0^{1}\log{z}\,dz=0,
\end{gather*}
and its derivative equals
\begin{gather*}
   \varphi'(t)= -e^{-t}\log{t}+\frac{e^{-t}}{t}
   -\frac{e^{-t}}{t}-\frac{e^{-t}}{t^2}+e^{-t}\log{t}
   =-\frac{e^{-t}}{t^2}<0,\ t\ge 1.
\end{gather*}
Therefore $ \varphi(t)<0, \,t\ge 1$, and point $t_0$ does not exist. So, $\frac{\partial \mathsf{V}(0, t)}{\partial H}<0$, and we get $(i)$.

Let us prove $(ii)$--$(iv)$. Obviously, since the derivative strictly increases, it implies that $\frac{\partial \mathsf{V}(H, t)}{\partial H}<0$ for all $H\in[0, 1/2]$ if (and only if)  $\frac{\partial \mathsf{V}(1/2, t)}{\partial H}\le 0$.  However, the behavior of the derivative $\frac{\partial \mathsf{V}(1/2, t)}{\partial H} $ was investigated in the proof of Theorem \ref{IBYMtheorem_2}, and it was stated  that there exists a point $1<\tau_{1/2}$ such that $\frac{\partial \mathsf{V}(1/2, t)}{\partial H}$ is negative for $0<t<\tau_{1/2}$, equals zero at $t=\tau_{1/2}$ and positive for $t>\tau_{1/2}$. From this we immediately get   $(ii)$--$(iv)$. 
\end{proof}
The next result is an immediate consequence of Lemma \ref{lemlem_aux}. Point $\tau_{1/2}>1$, as we said before,  was introduced in Theorem \ref{IBYMtheorem_2}. 
\begin{theorem}\label{IBYMsimsim-3} The behavior of $\mathsf{V}(H, t)=\mathsf{E} (X_t^H)^2,\ H\in[0, 1/2]$ is the following:
 $  \mathsf{V}(H, t) $ strictly decreases in $H\in[0,1/2]$ for $t\le \tau_{1/2}$, and for any $t> \tau_{1/2}$ there exists such   unique point $\widehat{H}_t\in(0,1/2)$  that   $\mathsf{V}(H, t)$ decreases for $H\in[0, \widehat{H}_t]$ and increases for $H\in(\widehat{H}_t, 1/2]$.
\end{theorem}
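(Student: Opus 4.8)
The plan is to deduce everything directly from Lemma~\ref{lemlem_aux}, using only the elementary fact that a continuously differentiable function of $H$ is strictly decreasing on an interval where its derivative is negative and strictly increasing where its derivative is positive. Fix $t>0$. Recall from \eqref{secder} that $\frac{\partial \mathsf{V}(H,t)}{\partial H}$ is strictly increasing and continuous in $H$ on $[0,1/2]$, the value at the endpoint $H=0$ being understood as the right derivative $\psi(t)$ considered in Lemma~\ref{lemlem_aux}\,(i); that $\mathsf{V}(\cdot,t)$ is indeed $C^1$ on $[0,1/2]$ for $t>0$ follows from differentiating \eqref{varnevar1} under the integral sign, since $z^{2H}\lvert\log z\rvert$ is integrable near $0$ uniformly for $H\in[0,1/2]$.

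First I would treat the range $0<t\le\tau_{1/2}$. If $t<\tau_{1/2}$, item (ii) of Lemma~\ref{lemlem_aux} gives $\frac{\partial \mathsf{V}(H,t)}{\partial H}<0$ for every $H\in[0,1/2]$, hence $\mathsf{V}(\cdot,t)$ is strictly decreasing on $[0,1/2]$. If $t=\tau_{1/2}$, item (iii) gives $\frac{\partial \mathsf{V}(0,t)}{\partial H}<0$, $\frac{\partial \mathsf{V}(1/2,t)}{\partial H}=0$, together with monotonicity of the derivative in $H$; therefore $\frac{\partial \mathsf{V}(H,t)}{\partial H}<0$ on $[0,1/2)$, which again yields that $\mathsf{V}(\cdot,t)$ strictly decreases on $[0,1/2]$. (The case $t=0$ is degenerate, $\mathsf{V}(\cdot,0)\equiv0$, and is excluded throughout this section.) This establishes the first assertion.

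Next I would treat $t>\tau_{1/2}$. By item (iv) of Lemma~\ref{lemlem_aux}, $\frac{\partial \mathsf{V}(H,t)}{\partial H}$ is increasing in $H$, is strictly negative at $H=0$, vanishes at a unique point $\widehat H_t$, and is strictly positive on $(\widehat H_t,1/2]$; moreover $\frac{\partial \mathsf{V}(1/2,t)}{\partial H}>0$ for $t>\tau_{1/2}$, as established in the proof of Theorem~\ref{IBYMtheorem_2}, so $\widehat H_t$ is genuinely interior, $\widehat H_t\in(0,1/2)$. Consequently $\frac{\partial \mathsf{V}(H,t)}{\partial H}<0$ on $[0,\widehat H_t)$ and $>0$ on $(\widehat H_t,1/2]$, so $\mathsf{V}(\cdot,t)$ strictly decreases on $[0,\widehat H_t]$ and strictly increases on $(\widehat H_t,1/2]$, which is the second assertion.

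There is really no hard step here: the analytic content has been fully absorbed into Lemma~\ref{lemlem_aux} (and, for the sign of $\partial \mathsf{V}(1/2,t)/\partial H$, into the proof of Theorem~\ref{IBYMtheorem_2}). The only points that deserve a word of care are purely formal — that $H=0$ is an endpoint, so one works with the one-sided derivative, which exists and is continuous for $t>0$, and that $t=0$ is excluded. I expect the write-up to be just a few lines.
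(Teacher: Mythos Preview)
Your proposal is correct and follows exactly the paper's approach: the paper states the theorem as an immediate consequence of Lemma~\ref{lemlem_aux} without further argument, and you spell out precisely that deduction. The extra care you take with one-sided differentiability at $H=0$ and with the degenerate case $t=0$ is not in the paper but is sound and harmless.
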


\begin{figure}[thb]
\centering
\subcaptionbox{$t=1.15$\label{fig:2c}}{\includegraphics[scale=0.55]{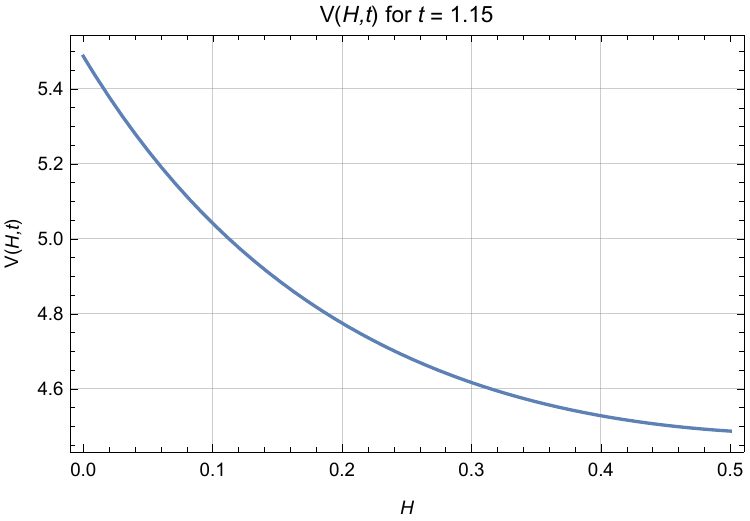}}
\qquad
\subcaptionbox{$t=1.25$ (minimum at $\widehat H_{1.25}\approx 0.4314$)\label{fig:2d}}{\includegraphics[scale=0.55]{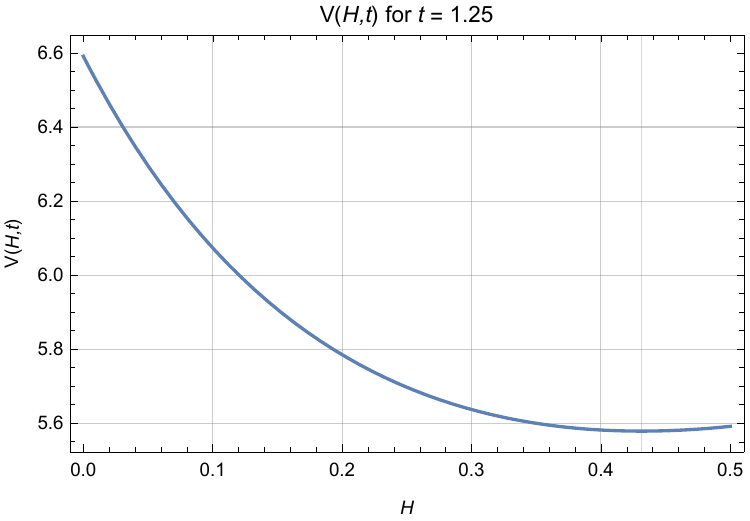}}
 \caption{Graphs of $\mathsf{V}(H,t)$ as functions of $H\in(0,\frac12)$}\label{fig:2}
\end{figure}

\begin{remark}
Figure \ref{fig:2} illustrates the behavior of the function 
$\mathsf{V}(H,t)$ for 
$H\in[0,\frac12]$ at $t = 1.15$ and $t = 1.25$. This supports the result of Theorem \ref{IBYMsimsim-3}, as detailed below:
\begin{itemize}
\item
For $t = 1.15 < \tau_{1/2}$, the function $\mathsf{V}(H,1.15)$ strictly decreases w.r.t.\ $H$ in the interval $H\in[0,\frac12]$ (Figure \ref{fig:2c}).
\item for $t = 1.25 >\tau_{1/2}$, the function $\mathsf{V}(H, 1.25)$ decreases as $H$ increases from $\frac12$ to $\widehat H_{1.25}\approx 0.4314$, and then increases (Figure \ref{fig:2d}).
\end{itemize}
\end{remark}

\begin{corollary}\label{IBYMcorcor-2} \begin{itemize}
    \item [(i)] Let $t\in(0, \tau_1]. $ Then $\mathsf{V}(H, t)$ decreases for $H\in[0, 1]$.
  \item [(ii)] Let $t\in(\tau_1, \tau_{1/2}]$.   Then $\mathsf{V}(H, t)$ decreases for $H\in[0, H_t]$ and increases for  $H\in[H_t, 1]$,   $H_t>1/2$.  
   \item [(iii)]  Let $t>\tau_{1/2}$.   Then $\mathsf{V}(H, t)$ decreases for $H\in[0, \widehat{H}_t]$ and increases for  $H\in[\widehat{H}_t, 1]$,   $\widehat{H}_t<1/2$.
\end{itemize}    
\end{corollary}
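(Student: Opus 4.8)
\emph{Proof proposal.} The plan is to derive the corollary by simply concatenating the two foregoing theorems at the seam $H=1/2$, using that $\mathsf{V}(\cdot,t)$ is continuous there (Lemma~\ref{lem:covarassympt}). For each fixed $t>0$, Theorem~\ref{IBYMsimsim-3} describes the monotonicity of $H\mapsto\mathsf{V}(H,t)$ on $[0,1/2]$ and Theorem~\ref{IBYMtheorem_2} describes it on $[1/2,1]$; since the function is continuous across $H=1/2$, the global behavior on $[0,1]$ is obtained by gluing the two local pictures, provided the $t$-ranges in the two theorems are matched up correctly. So the first step I would carry out is to record the numerical ordering $1<\tau_1<\log 3<\tau_{1/2}$ established in Theorem~\ref{IBYMtheorem_2}, which is what makes the three cases of the corollary line up with the case splits of the two theorems.

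For item (i), take $t\in(0,\tau_1]$. Since $\tau_1<\tau_{1/2}$, Theorem~\ref{IBYMsimsim-3} gives that $\mathsf{V}(\cdot,t)$ strictly decreases on $[0,1/2]$, while item~(a) of Theorem~\ref{IBYMtheorem_2} gives that it decreases on $[1/2,1]$; concatenation yields that it decreases on $[0,1]$. For item (ii), take $t\in(\tau_1,\tau_{1/2}]$. Then $t\le\tau_{1/2}$, so Theorem~\ref{IBYMsimsim-3} again gives strict decrease on $[0,1/2]$; and because $\tau_1<\log 3<\tau_{1/2}$ with item~(d) of Theorem~\ref{IBYMtheorem_2} reaching $t=\tau_{1/2}$, this $t$ falls into exactly one of the ranges of items~(b), (c), (d) of Theorem~\ref{IBYMtheorem_2}, each of which supplies an $H_t\in(1/2,1)$ with $\mathsf{V}(\cdot,t)$ decreasing on $[1/2,H_t]$ and increasing on $[H_t,1]$. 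Concatenating, $\mathsf{V}(\cdot,t)$ decreases on $[0,H_t]$ and increases on $[H_t,1]$, with $H_t>1/2$ as claimed.

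For item (iii), take $t>\tau_{1/2}$. Here Theorem~\ref{IBYMsimsim-3} produces the unique point $\widehat H_t\in(0,1/2)$ with $\mathsf{V}(\cdot,t)$ decreasing on $[0,\widehat H_t]$ and increasing on $[\widehat H_t,1/2]$, while item~(e) of Theorem~\ref{IBYMtheorem_2} gives that $\mathsf{V}(\cdot,t)$ increases on $[1/2,1]$; concatenation yields decrease on $[0,\widehat H_t]$ and increase on $[\widehat H_t,1]$.

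The argument is essentially bookkeeping, so the only point needing care — the ``main obstacle'' such as it is — is twofold: first, verifying that the $t$-ranges appearing in Theorems~\ref{IBYMtheorem_2} and~\ref{IBYMsimsim-3} partition $(0,\infty)$ with no gaps or double-counting at the thresholds $\tau_1$, $\log 3$, $\tau_{1/2}$ (this is immediate once one notes how the half-open intervals in the two theorems abut); and second, making explicit that because $\mathsf{V}(\cdot,t)$ is genuinely continuous at $H=1/2$, a ``strictly decreasing on $[0,1/2]$'' piece may be concatenated with a ``decreasing then increasing on $[1/2,1]$'' (or ``increasing on $[1/2,1]$'') piece to yield a bona fide global monotonicity statement on $[0,1]$, with the interior minimum located at whichever of $H_t$, $\widehat H_t$, or (in case~(i)) the right endpoint $H=1$ is indicated. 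Both facts follow at once from the cited results and from Lemma~\ref{lem:covarassympt}.
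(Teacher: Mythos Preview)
Your proposal is correct and matches the paper's approach: the paper states the corollary as ``an immediate consequence'' of Lemma~\ref{lemlem_aux} (and implicitly of Theorem~\ref{IBYMtheorem_2}) without writing out a proof, and what you have done is exactly the bookkeeping of gluing Theorem~\ref{IBYMsimsim-3} on $[0,1/2]$ to Theorem~\ref{IBYMtheorem_2} on $[1/2,1]$ along the common value at $H=1/2$. Your care about matching the half-open $t$-intervals and about continuity at $H=1/2$ is appropriate, though in fact continuity is automatic from the single formula~\eqref{varnevar1}, valid for all $H\in(0,1)$, so the two pieces literally share the value $\mathsf{V}(1/2,t)$.
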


\begin{figure}
\centering
\subcaptionbox{$t=1$\label{fig:3a}}{\includegraphics[scale=0.55]{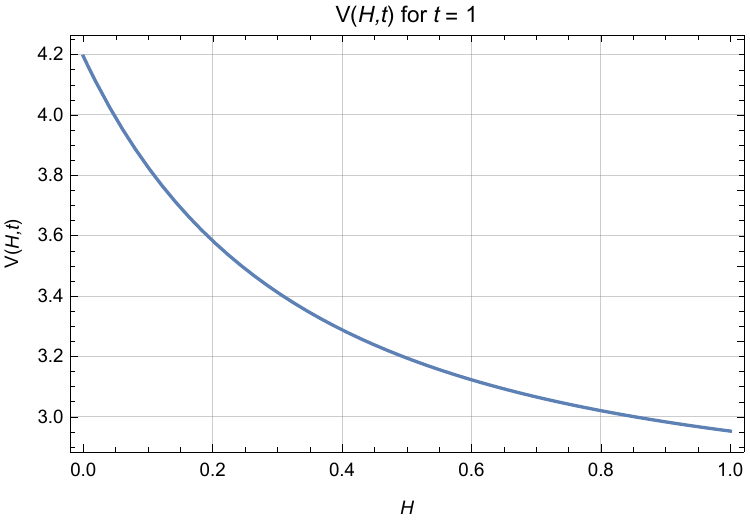}}
\qquad
\subcaptionbox{$t=1.08$ (minimum at $H_{1.08}\approx 0.8102$)\label{fig:3b}}{\includegraphics[scale=0.55]{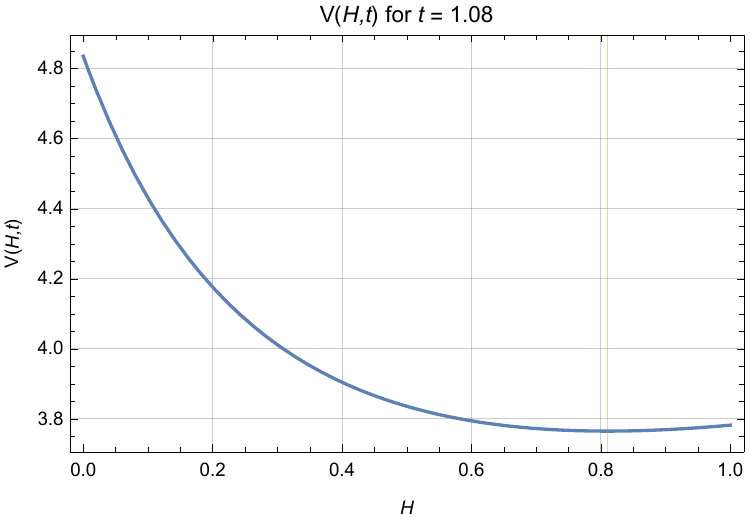}}
\\[12pt]
\subcaptionbox{$t=1.15$ (minimum at $H_{1.15}\approx 0.5731$)\label{fig:3c}}{\includegraphics[scale=0.55]{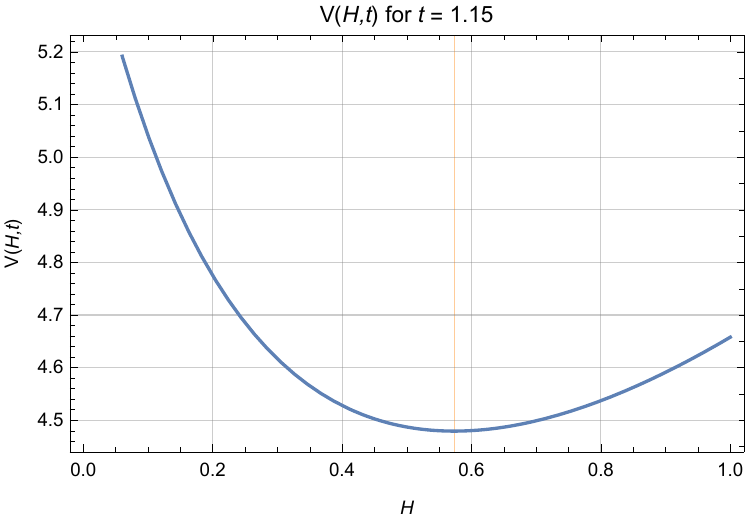}}
\qquad
\subcaptionbox{$t=1.25$ (minimum at $\widehat H_{1.25}\approx 0.4314$)\label{fig:3d}}{\includegraphics[scale=0.55]{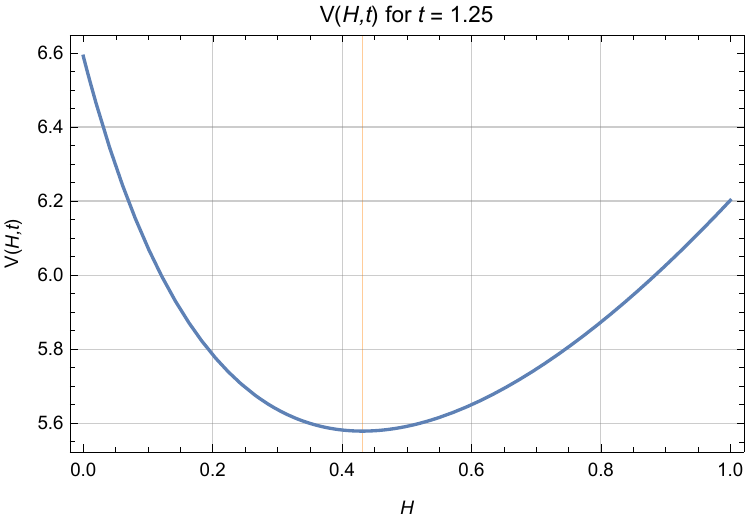}}
\\[12pt]
\subcaptionbox{$t= \log(2+\sqrt3)$ (minimum at $\widehat H_{\log(2+\sqrt3)}\approx 0.3801$)\label{fig:3g}}{\includegraphics[scale=0.55]{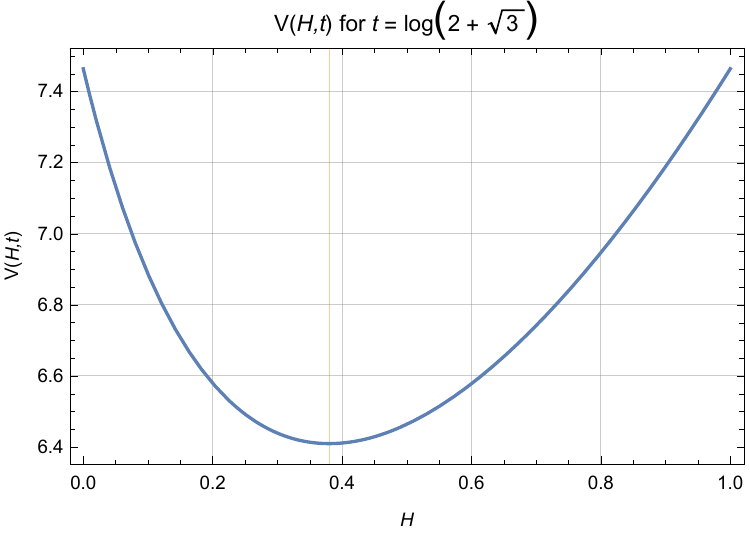}}
\qquad
\subcaptionbox{$t=1.5$ (minimum at $\widehat H_{1.5}\approx 0.3055$)\label{fig:3f}}{\includegraphics[scale=0.55]{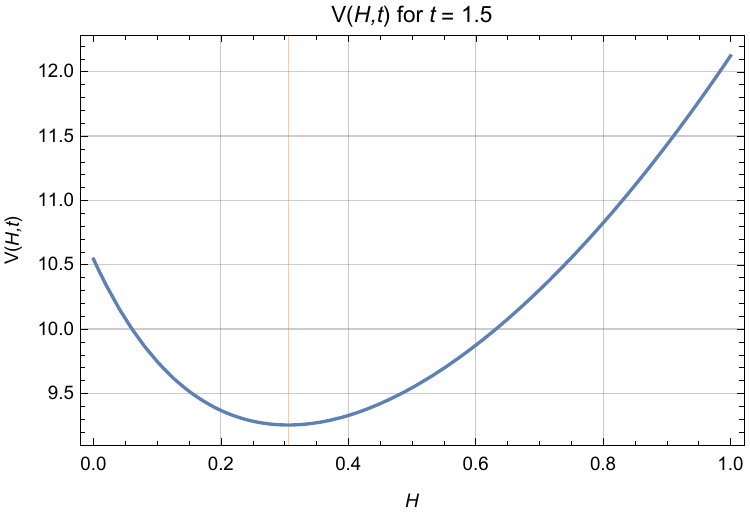}}
 \caption{Graphs of $\mathsf{V}(H,t)$ as functions of $H\in(0,1)$}\label{fig:3}
\end{figure}

\begin{remark}
Figure~\ref{fig:3} presents the graphs of the variance $\mathsf{V}(H,t)$ as a function of $H$ for all $H \in [0,1]$, evaluated at the points 
$t = 1, 1.08, 1.15, 1.25$, and $1.5$.

These graphs support the statement in Corollary~\ref{IBYMcorcor-2} and are consistent with Remark~\ref{rem-compar0-1}. 
Specifically, we observe the following behavior of $\mathsf{V}(H,t)$:
\begin{itemize}
\item
If $t<\tau_1$, then the function 
$\mathsf{V}(H,t)$ decreases monotonically with respect to $H$ over the interval 
$H\in[0,1]$ (Figure~\ref{fig:3a}).
\item
If  $t>\tau_1$, then the function $\mathsf{V}(H,t)$ first decreases and then increases, exhibiting a single minimum. The position of this minimum depends on the value of $t$:
\begin{itemize}
\item
For $t\in(\tau_1,\tau_{1/2})$, the minimum lies within the interval $(\frac12,1)$ (Figures~\ref{fig:3b}--\ref{fig:3c}).
\item
For $t>\tau_{1/2}$, the minimum lies within the interval $(0,\frac12)$ (Figures~\ref{fig:3d}--\ref{fig:3f}).
\end{itemize}
\end{itemize}
Moreover, we note the following relationships:
\begin{itemize} 
\item 
$\mathsf{V}(0,t) > \mathsf{V}(1,t)$ for $t < \log(2 + \sqrt3)$  (Figures~\ref{fig:3a}--\ref{fig:3d}), 
\item 
$\mathsf{V}(0,t) = \mathsf{V}(1,t)$ at $t = \log(2 + \sqrt3)$  (Figure~\ref{fig:3g}), and
\item 
$\mathsf{V}(0,t) < \mathsf{V}(1,t)$ for $t = 1.5 > \log(2 + \sqrt3) \approx 1.31696$ (Figure~\ref{fig:3f}).
\end{itemize}

The surface plot of the variance $\mathsf{V}(H,t)$ as a function of $(H,t)$ over the region $0\le H\le 1$, $0\le t\le 1.5$ is shown in Figure~\ref{fig:4}.
\end{remark}

\begin{figure}
\centering
\includegraphics[scale=0.7]{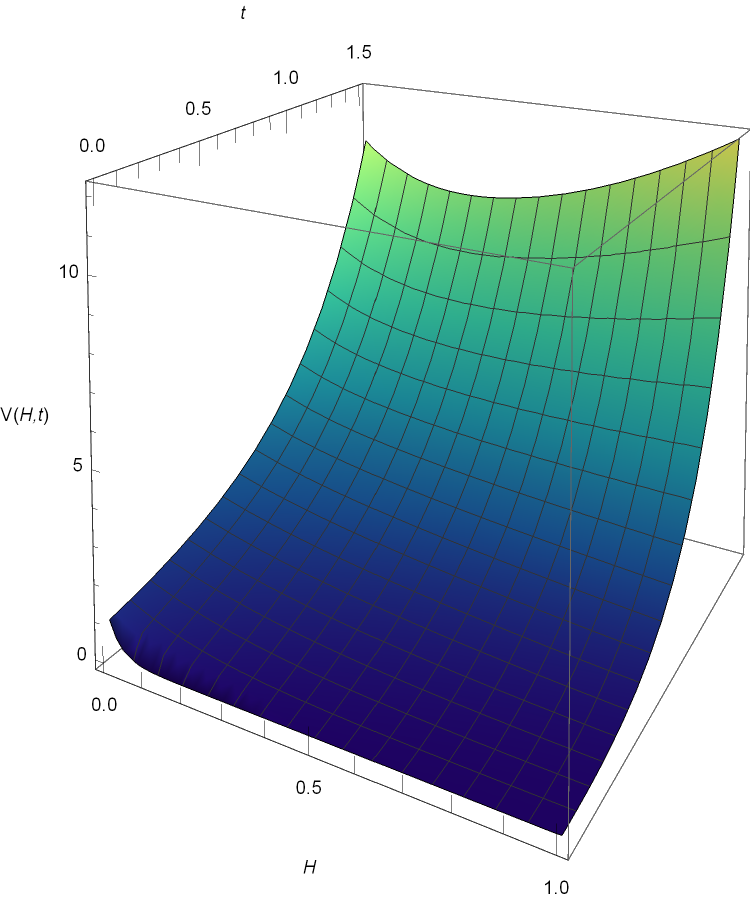}
\caption{Surface plot of $\mathsf{V}(H,t)$ as a function of $(H,t)$\label{fig:4}}
\end{figure}

\begin{remark} Behavior of entropy of EWIFG-process $X^H$ that mimics the behavior of variance, differs substantially from the behavior of the entropy of fractional Gaussian noise, described in \cite{MishuraRalchenkoSchilling2022}. The latter one increases in $H\in[0,1/2]$ and decreases in $H\in[1/2,1]$. So, exponent $e^t$ changes the behavior of fractional process crucially.     
\end{remark}

\subsection{Monotonicity and asymptotic behavior of variance of EWIFG-process in time}
Note that variance $\mathsf{V}(H, t)$ increases in $t$ because
$$\frac{\partial \mathsf{V}(H, t)}{\partial t}=e^tt^{2H}+2He^tt^{2H-1}+e^{2t}\int_0^te^{-z}z^{2H}dz>0,\quad t\ge 0.$$    Consider the asymptotic behavior as $t\to \infty$ of $\mathsf{V}(H, t)=\mathsf{E}\left(X_t^H\right)^2$ for any
$H\in[0, 1]$.

\begin{lemma}
    For any $H\in[0, 1]$   
    \begin{gather}\label{asympt}
        \lim_{t\to\infty}e^{-2t}\mathsf{V}(H, t) =\Gamma(2H+1)/2.
    \end{gather}
\end{lemma}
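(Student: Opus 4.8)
The plan is to split the argument according to the three regimes $H\in(0,1)$, $H=0$, and $H=1$, since each is governed by a different closed form for $\mathsf{V}(H,t)$ already established above.

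For $H\in(0,1)$ I would start from representation \eqref{varnevar1}, which is valid for every such $H$ by Remark~\ref{remrem1}, and divide through by $e^{2t}$ to obtain
\[
e^{-2t}\mathsf{V}(H,t)=e^{-t}t^{2H}+\frac12\int_0^t e^{-z}z^{2H}\,dz-\frac12 e^{-2t}\int_0^t e^{z}z^{2H}\,dz .
\]
Then I would pass to the limit term by term: the first summand tends to $0$; the second is nondecreasing in $t$ and converges, by the definition of the Gamma function, to $\tfrac12\int_0^\infty e^{-z}z^{2H}\,dz=\tfrac12\Gamma(2H+1)$; and for the third I would use that $z\mapsto z^{2H}$ is nondecreasing on $[0,\infty)$, so that $\int_0^t e^{z}z^{2H}\,dz\le t^{2H}\int_0^t e^{z}\,dz\le t^{2H}e^{t}$, hence $e^{-2t}\int_0^t e^{z}z^{2H}\,dz\le t^{2H}e^{-t}\to0$. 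Summing the three limits yields \eqref{asympt} for $H\in(0,1)$.

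The boundary cases reduce to one-line computations from the explicit variances: for $H=0$ we have $\mathsf{V}(0,t)=\tfrac12(e^{2t}+1)$ by Theorem~\ref{theorem:R0}, hence $e^{-2t}\mathsf{V}(0,t)=\tfrac12(1+e^{-2t})\to\tfrac12=\tfrac12\Gamma(1)$; for $H=1$ we have $\mathsf{V}(1,t)=(e^{t}-1)^2$ by \eqref{eq:varH=1}, hence $e^{-2t}\mathsf{V}(1,t)=(1-e^{-t})^2\to1=\tfrac12\Gamma(3)$. Both match the right-hand side of \eqref{asympt}, which completes the proof.

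I do not anticipate any genuine obstacle here: the only step requiring a moment's thought is controlling $e^{-2t}\int_0^t e^{z}z^{2H}\,dz$, and the crude monotonicity bound above settles it at once. If a uniform-in-$H$ version of \eqref{asympt} were desired, the same estimates carried out with the bound $z^{2H}\le 1+z^{2}$ (valid for $H\in[0,1]$ and $z\ge0$) would suffice, but for the stated pointwise claim nothing beyond the above is needed.
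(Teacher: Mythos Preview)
Your proof is correct. The argument coincides with the paper's treatment of the range $H\in(0,1/2)$, which also starts from \eqref{varnevar1} and lets $t\to\infty$ term by term; you simply apply this uniformly over all of $(0,1)$, as Remark~\ref{remrem1} permits. The paper instead treats $H\in(1/2,1)$ separately via the double-integral representation \eqref{eq:varbase} and two applications of L'H\^{o}pital's rule, reducing to $H(2H-1)\Gamma(2H-1)=\tfrac12\Gamma(2H+1)$. Your route is shorter and avoids that detour; you also supply an explicit bound for the vanishing of $e^{-2t}\int_0^t e^{z}z^{2H}\,dz$, which the paper leaves implicit. The boundary cases $H=0$ and $H=1$ are handled identically in both.
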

\begin{proof}First, consider the boundary values. 
Let $H=0$. Then $\mathsf{V}(0, t)=\frac{e^{2t}+1}{2}$, whence the limit \eqref{asympt} equals $ 1/2=\Gamma(1)/2$. For $H=1/2$ the limit \eqref{asympt} equals $ 1/2=\Gamma(2)/2$.
 Let $H=1$. Then the limit \eqref{asympt} equals $ 1=\Gamma(3)/2$.

Now, let $H\in(1/2, 1)$. Then
$$
\mathsf{V}(H,t)
=2H(2H-1)\int_0^te^v\int_0^v e^u(v-u)^{2H-2}dudv,
$$
and applying L'H\^{o}pital's rule twice, we get that
\begin{align*}
\lim_{t\to\infty}\frac{\mathsf{V}(H,t)}{e^{2t}}
&=2H(2H-1)\lim_{t\to\infty}
\frac{\int_0^te^u(t-u)^{2H-2}du}{e^{t}}\\
&=H(2H-1)\lim_{t\to\infty}\int_0^te^{-z}z^{2H-2}dz
=H(2H-1)\Gamma(2H-1)\\
&=H\Gamma(2H)=\frac12 \Gamma(2H+1).
\end{align*}

Let $H\in(0,1/2)$. Then, by \eqref{varnevar1}, we obtain
\begin{align*}
\lim_{t\to\infty}\frac{\mathsf{V}(H,t)}{e^{2t}}
&= \lim_{t\to\infty}\left(\frac{t^{2H}}{e^t}
+\frac12 \int_0^t e^{-z}z^{2H}dz
-\frac12 e^{-2t}\int_0^t e^{z}z^{2H}dz \right)
\\
&=\frac12 \Gamma(2H+1),
\end{align*}
and we get \eqref{asympt} for any $H\in[0,1]$.
\end{proof}

\section{EWIFG-process with exponent \texorpdfstring{$e^{kt}$}{Exp(kt)}}\label{IBYMsecsec4}
Let us consider the process $X^{H,k}=\{X^{H,k}_t,\ t\ge 0\}$ of the form 
$$
X^{H,k}_t=\int_0^te^{ks}dB_s^H,\quad k\in\mathbb{R}\setminus \{0\},\quad H\in[0, 1].
$$
Its square characteristics can be calculated according to Lemma \ref{lem:covar}, replacing everywhere exponent $e^s$ for $e^{ks}$ for any argument $s\ge 0.$ Also, $X^{H,k}$ can be extended to $H=0$ and $H=1$ similarly to Subsections \ref{extendH=1} and \ref{extendH=0},  and we obtain the following variances:

\begin{gather}\label{eq:varsk0}
\mathsf{V}(0, k, t)= \begin{cases}
\frac{e^{2kt}+1}{2}, & t>0,\\
0, & t=0,
\end{cases}\qquad  \mathsf{V}(1/2, k, t)=\int_0^te^{2ks}ds=\frac{e^{2kt}-1}{2k}, 
\end{gather}
and 
\begin{gather}\label{eq:varsk1}\mathsf{V}(1, k, t)=\left(\int_0^te^{ks}ds\right)^2
=\frac{e^{2kt}-2e^{kt}+1}{k^2}.
\end{gather}
Let us  compare the variances for $H=1/2$ and $H=1$ respectively.
\begin{lemma}
Variances $\mathsf{V}(1/2, k, t)$ and $\mathsf{V}(1, k, t)$ have the following properties:
\begin{enumerate}[1)]
    \item $\mathsf{V}(1/2, k, t)=\mathsf{V}(1, k, t)=0,\ t=0,$
    \item $\mathsf{V}(1/2, k, t)=\mathsf{V}(1, k, t)
    =\frac{4}{(2-k)^2},
    \ t=\frac{1}{k}\log\frac{k+2}{2-k},\ |k|<2,$
   
    \item\label{eq:VHkt2} $\mathsf{V}(1/2, k, t)>\mathsf{V}(1, k, t),\
 0 < t <\frac{1}{k}\log\frac{k+2}{2-k},\ |k|<2,$
    \item\label{eq:VHkt3} $\mathsf{V}(1/2, k, t)<\mathsf{V}(1, k, t),\
 t >\frac{1}{k}\log\frac{k+2}{2-k},\ |k|<2,$
  \item\label{eq:VHkt1} $\mathsf{V}(1/2, k, t)>\mathsf{V}(1, k, t),\ t>0,\ |k|\ge 2.$
\end{enumerate}
\end{lemma}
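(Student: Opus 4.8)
The plan is to reduce the whole lemma to one elementary computation. Write $x=e^{kt}$; then, combining the explicit formulas \eqref{eq:varsk0} and \eqref{eq:varsk1} over the common denominator $2k^2>0$ and factoring,
$$\mathsf{V}(1/2,k,t)-\mathsf{V}(1,k,t)=\frac{x^2-1}{2k}-\frac{(x-1)^2}{k^2}=\frac{(x-1)\bigl[(k-2)x+(k+2)\bigr]}{2k^2},$$
so the sign of the difference is exactly the sign of $P(x):=(x-1)\bigl[(k-2)x+(k+2)\bigr]$. I would also record at the outset the auxiliary fact that $t\mapsto x=e^{kt}$ maps $(0,\infty)$ onto $(1,\infty)$ increasingly when $k>0$, and onto $(0,1)$ decreasingly when $k<0$; this correspondence is what I will push the sign information through.

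Part~1) is immediate: $t=0$ gives $x=1$, hence $P(1)=0$. For part~2), set $P(x)=0$ with $x\neq1$; this forces $(k-2)x+(k+2)=0$, i.e. $x_0:=\tfrac{k+2}{2-k}$. I would check that $x_0$ is an admissible value $e^{kt}$ with $t>0$ precisely for $|k|<2$: then $x_0>0$, and $x_0>1$ if $k>0$ while $x_0<1$ if $k<0$, so in either case $t=\tfrac1k\log\tfrac{k+2}{2-k}>0$. The common value then follows from $x_0-1=\tfrac{2k}{2-k}$, giving $\mathsf{V}(1,k,t)=\tfrac{(x_0-1)^2}{k^2}=\tfrac{4}{(2-k)^2}$.

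For parts~3)--5) I would do the sign analysis of $P$ on the relevant $x$-range, case by case. If $0<k<2$: write $(k-2)x+(k+2)=(k-2)(x-x_0)$ with $x_0>1$ and $k-2<0$, so on $x\in(1,x_0)$ one has $x-1>0$ and $(k-2)(x-x_0)>0$, hence $P>0$, while on $x>x_0$ the second factor flips sign and $P<0$; since $x=e^{kt}$ is increasing, this yields parts~3) and~4). If $-2<k<0$: now $x$ ranges over $(0,1)$ with $x_0\in(0,1)$, and the \emph{decreasing} map $x=e^{kt}$ sends $x\in(x_0,1)$ to $t\in(0,\tfrac1k\log x_0)$; the same two factors give $P>0$ on $(x_0,1)$ and $P<0$ on $(0,x_0)$, again producing parts~3) and~4). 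If $k\ge2$: for $t>0$ we have $x>1$, so $x-1>0$, and $(k-2)x+(k+2)>0$ since both coefficients are nonnegative and $k+2>0$, hence $P>0$ throughout; if $k\le-2$: for $t>0$ we have $0<x<1$, so $x-1<0$, while $(k-2)x+(k+2)<0$ because $(k-2)x<0$ and $k+2\le0$, hence $P>0$ again. Both cases give part~5).

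The whole argument is elementary, with no analytic difficulty; the only point requiring care is the bookkeeping when $k<0$, where $x=e^{kt}$ is decreasing, so the ordering of the $t$-intervals is the reverse of the ordering of the $x$-intervals, and one must confirm both that the threshold $\tfrac1k\log\tfrac{k+2}{2-k}$ is positive and that the inequality directions in parts~3) and~4) flip consistently under this reversal.
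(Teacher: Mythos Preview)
Your proof is correct and follows essentially the same route as the paper: both reduce the comparison to the factorization $(e^{kt}-1)\bigl[(k-2)e^{kt}+(k+2)\bigr]$ and then carry out the identical case analysis in $k$. Your substitution $x=e^{kt}$ and direct computation of the difference $\mathsf{V}(1/2,k,t)-\mathsf{V}(1,k,t)$ is a mild notational streamlining of the paper's argument, which instead multiplies the inequality through by $2k^2$ before factoring, but the content is the same.
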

\begin{proof} The first two equalities are immediate. Just note that   $\frac{1}{k}\log\frac{k+2}{2-k}>0$ for $|k|<2$.
Now, let us compare the values of $\mathsf{V}(1/2, k, t)$ and $\mathsf{V}(1, k, t)$ for arbitrary $t>0$.  First, let us find such $t>0$ and $k\ne 0$ that
 
$$
\frac{e^{2kt}-1}{2k} \ge \frac{e^{2kt} - 2e^{kt} + 1}{k^2}.
$$

Multiplying both sides of this inequality by $2k^2>0$ leads to the following relation:
$$
(k-2)e^{2kt} + 4e^{kt} - (k+2) \ge 0,
$$
which can be represented in the form
\begin{gather}\label{eq:varskge}
 (e^{kt} - 1)\left((k-2)e^{kt} + (k+2)\right) \ge 0.   
\end{gather}
Let $|k|<2$. Then $(k - 2) < 0$, therefore, dividing both sides of \eqref{eq:varskge} by $(k - 2)$ give us the equivalent relation
$$
(e^{kt} - 1)\left(e^{kt} - \frac{k+2}{2-k}\right) \le 0.
$$

In the case $0<k<2$, we obtain
\begin{gather*}
  (e^{kt} - 1) > 0 \mbox{ and }
  kt < \log{\frac{k+2}{2-k}} \mbox{ for } 0<t<\frac{1}{k}\log\frac{k+2}{2-k},
\end{gather*}
and, for $-2<k<0$,
\begin{gather*}
  (e^{kt} - 1) < 0 \mbox{ and }
  kt > \log{\frac{k+2}{2-k}} \mbox{ for } 0<t<\frac{1}{k}\log\frac{k+2}{2-k},
\end{gather*}
consequently, \ref{eq:VHkt2} and \ref{eq:VHkt3} are proved.
 
Finally, for any $t>0$ we have
\begin{gather*}
  (e^{kt} - 1) > 0 \mbox{ and }
  (k-2)e^{kt} + (k+2)\ge 4>0 \mbox{ for } k \ge 2,\\
  (e^{kt} - 1) < 0 \mbox{ and }
  (k-2)e^{kt} + (k+2)<0 \mbox{ for } k \le -2,
\end{gather*}
thus, relation in \ref{eq:VHkt1} follows.
\end{proof}

\begin{lemma}
Variances $\mathsf{V}(0, k, t)$ and $\mathsf{V}(1/2, k, t), k\neq 0,$ have the following properties:
\begin{enumerate}[1),left=1em]
\item \label{zeroonehalf1} $\mathsf{V}(0, k, t)=\mathsf{V}(1/2, k, t)=0$, $t=0$, $k\in\mathbb{R}\setminus\{0\}$,
\item \label{zeroonehalf2}
$\mathsf{V}(0, k, t)=\mathsf{V}(1/2, k, t)=\frac{1}{1-k}$,
$t=\frac{1}{2k}\log\left(\frac{k+1}{1-k}\right)$, $|k|<1$,
\item \label{zeroonehalf3}$\mathsf{V}(0, k, t)>\mathsf{V}(1/2, k, t)$, $t<\frac{1}{2k}\log\left(\frac{k+1}{1-k}\right)$, and $\mathsf{V}(0, k, t)<\mathsf{V}(1/2, k, t)$, $t>\frac{1}{2k}\log\left(\frac{k+1}{1-k}\right)$, $|k|<1$,
\item \label{zeroonehalf5} $\mathsf{V}(0, k, t)>\mathsf{V}(1/2, k, t)$, $t>0$, $|k|\ge 1$.
\end{enumerate}
\end{lemma}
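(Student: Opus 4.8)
The plan is to compare the two explicit formulas from \eqref{eq:varsk0} head on. For $t>0$ one has $\mathsf{V}(0,k,t)=\tfrac12\bigl(e^{2kt}+1\bigr)$ and $\mathsf{V}(1/2,k,t)=\tfrac{1}{2k}\bigl(e^{2kt}-1\bigr)$, and both vanish at $t=0$; this already gives item~\ref{zeroonehalf1}. For the remaining items I would analyse the sign of the difference
\[
 D(k,t):=\mathsf{V}(0,k,t)-\mathsf{V}(1/2,k,t)
 =\frac{e^{2kt}+1}{2}-\frac{e^{2kt}-1}{2k}
 =\frac{(k-1)e^{2kt}+(k+1)}{2k},
\]
so that everything is reduced to the sign of the affine function of $e^{2kt}$ given by $g(t):=(k-1)e^{2kt}+(k+1)$, combined with the sign of $k$.

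For $|k|<1$ we have $k-1<0$, hence $g(t)=0$ exactly when $e^{2kt}=\frac{k+1}{1-k}$, i.e.\ when $t=t^{*}:=\frac{1}{2k}\log\frac{k+1}{1-k}$; here $t^{*}>0$, since $\frac1k$ and $\log\frac{k+1}{1-k}$ have the same sign for $|k|<1$. Substituting $e^{2kt^{*}}=\frac{k+1}{1-k}$ into $\tfrac12(e^{2kt}+1)$ yields the common value $\frac{1}{1-k}$, which is item~\ref{zeroonehalf2}. For item~\ref{zeroonehalf3} I would split into the subcases $0<k<1$ and $-1<k<0$. In the first, $t\mapsto e^{2kt}$ increases from $1<\frac{k+1}{1-k}$, so $g(t)>0$ for $t<t^{*}$ and $g(t)<0$ for $t>t^{*}$; since $2k>0$, this is also the sign of $D(k,t)$. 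In the second, $t\mapsto e^{2kt}$ decreases from $1>\frac{k+1}{1-k}$, so $g(t)<0$ for $t<t^{*}$ and $g(t)>0$ for $t>t^{*}$; now $2k<0$ reverses the sign, and one again gets $D(k,t)>0$ for $t<t^{*}$ and $D(k,t)<0$ for $t>t^{*}$, which is exactly item~\ref{zeroonehalf3}.

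Finally, for $|k|\ge1$ take $t>0$. If $k\ge1$, then $(k-1)e^{2kt}\ge0$ and $k+1>0$, so $g(t)>0$, and since $2k>0$ we get $D(k,t)>0$; if $k\le-1$, then $0<e^{2kt}<1$ while $k-1<0$ and $k+1\le0$, so $g(t)<0$, and since $2k<0$ again $D(k,t)>0$. This establishes item~\ref{zeroonehalf5}; the endpoints $k=\pm1$ only need the trivial remark that one coefficient of $g$ vanishes there. All computations are elementary; the main (and essentially only) obstacle is the sign bookkeeping for $k<0$, where dividing by $2k$ reverses inequalities and where the threshold $\frac{k+1}{1-k}$ crosses $1$ precisely at $k=0$, so that ``$t$ small'' corresponds to ``$e^{2kt}$ small'' for $k>0$ but to ``$e^{2kt}$ large'' for $k<0$; keeping these orientations straight is what makes the case analysis slightly delicate.
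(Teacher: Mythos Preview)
Your proof is correct and follows essentially the same route as the paper's: both reduce the comparison to the sign of $\dfrac{(k-1)e^{2kt}+(k+1)}{2k}$ (the paper writes the equivalent condition $k\bigl((k-1)e^{2kt}+k+1\bigr)>0$) and then carry out the same case analysis on $|k|<1$ versus $|k|\ge1$ and on the sign of $k$. Your write-up is in fact a bit more detailed, explicitly verifying $t^{*}>0$ and the common value $\tfrac{1}{1-k}$, but the underlying argument is identical.
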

\begin{proof}
Equalities \ref{zeroonehalf1} and \ref{zeroonehalf2} are evident. Consider inequality 
$$\frac{e^{2kt}+1}{2}>\frac{e^{2kt}-1}{2k},$$
which is equivalent to inequality  $k\left((k-1)e^{2kt}+k+1\right)>0.$ Obviously, it holds for all $t>0$ if $|k|\ge 1,$ whence \ref{zeroonehalf5} follows. Let $0<k<1$. Then the latter inequality is equivalent to $(1-k)e^{2kt}<k+1$, or $0< t<\frac{1}{2k}\log\left(\frac{k+1}{1-k}\right).$ Similarly, for $-1<k<0$ we get the equivalent inequality $(1-k)e^{2kt}>k+1$, thus,
$0< t<\frac{1}{2k}\log\left(\frac{k+1}{1-k}\right)$, whence~\ref{zeroonehalf3} follows. 
\end{proof}

\begin{remark}
    Consider in more detail the process 
${X^{H,-1}}=\{X^{H,-1}_t,\, t\ge 0\}$  that has the form 
$$
X^{H,-1}_{t}=\int_0^te^{-s}dB_s^H,\quad H\in[0, 1].
$$
Its variance can be calculated according to Lemma \ref{lem:covar}, replacing everywhere exponent $e^s$ for $e^{-s}$ for any argument $s\ge 0,$ and it equals 
\begin{multline}\label{eq:varsk-}
\mathsf{V}(H, -1, t)
=\mathsf{E} \left(X^{H,-1}_{t}\right)^2
\\
=e^{-t}t^{2H}+\frac{1}{2}\int_0^te^{-z} z^{2H}dz
  -\frac{1}{2}e^{-2t} \int_0^t e^{u} u^{2H}du
=\mathsf{V}(H, 1, t)e^{-2t}.
\end{multline}

Also, $X^{H,-1}$ can be extended to $H=0$ and $H=1$ similarly to Subsections \ref{extendH=1} and \ref{extendH=0}, and taking  into account  \eqref{eq:varsk0}, \eqref{eq:varsk1} and \eqref{eq:varsk-},  from where we obtain the following formulas for variances:
\begin{gather*}
\mathsf{V}(0, -1, t) = \frac{e^{-2t}+1}{2}=e^{-2t}\mathsf{V}(0, \,t),
\quad \mathsf{V} (1/2, -1, t)= \frac{1-e^{-2t}}{2}=e^{-2t}\mathsf{V} (1/2, \,t),\\
\mbox{and} 
\quad \mathsf{V}(1, -1, t)= \left(1-e^{-t}\right)^2=e^{-2t}\mathsf{V}(1, \,t).
\end{gather*}
Therefore the behavior of $\mathsf{V}(H, -1, t)$ in $H$ is described by Theorems  \ref{IBYMtheorem_2}, \ref{IBYMsimsim-3} and  Corollary \ref{IBYMcorcor-2}.
Moreover, the process $Y_t=e^tX^{H,-1}_t=\int_0^te^{t-s}dB_s^H,\, t\ge 0$, that is often identified as the    Ornstein--Uhlenbeck fractional process,  has the same variance as $X_t^H$. This fact is not surprising if we remember that fractional Brownian motion has stationary increments. Note, however, that the entropy of $X^{H,-1}$ as the function of time has opposite behavior for $H\ge \frac12$ and $H<\frac12$: for $H\ge \frac12$ it increases in time while for   $H<\frac12$ decreases. Indeed, it is obvious for $H=\frac12$. Let $H\neq \frac12$. Then  \begin{gather*} \frac{\partial \mathsf{V}(H, -1, t)}{\partial t}=e^{-2t}\left(-e^tt^{2H}+2He^tt^{2H-1}+\int_0^te^zz^{2H}dz\right),
 \end{gather*}
 and the function in the brackets equals zero at zero and has a derivative given by $2H( 2H-1)e^tt^{2H-1}$. So, it is strictly positive for $H>\frac12$ and strictly negative for $H < \frac12$. 
It is worth mentioning that the formulas for the variance and covariance of the fractional Ornstein--Uhlenbeck process were derived in \cite{KMR2017} and \cite{MPRYT2018}, respectively. The results presented in Lemma~\ref{lem:covar} are consistent with these findings.
 \end{remark}

\appendix
\section{}\label{IBYMappen}

\begin{lemma}\label{lemlem1} The following improper integrals are well defined and have such signs:   $$\int_0^\infty e^{-z}\log z\,dz<0,\, \int_0^\infty e^{-z}z\log z\,dz>0,\,\int_0^\infty e^{-z}z^2\log z\,dz>0.$$
\end{lemma}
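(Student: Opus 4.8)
The plan is to handle the three integrals together by reducing everything to the first one. Write $I_n=\int_0^\infty e^{-z}z^n\log z\,dz$ for $n=0,1,2$, so that the statement asks for $I_0<0$, $I_1>0$, $I_2>0$. First I would dispose of well-posedness: near $z=0$ the integrand is bounded in absolute value (say on $(0,1]$) by $C\,z^{n-1/2}$ because $z^{1/2}\log z$ is bounded there, and this is integrable; near $z=+\infty$ the factor $e^{-z}$ dominates any power of $z$ times $\log z$. Hence all three integrals converge absolutely. (One could alternatively note that $I_n=\Gamma'(n+1)$ and invoke the classical values $\Gamma'(1)=-\gamma$, $\Gamma'(2)=1-\gamma$, $\Gamma'(3)=3-2\gamma$, but I prefer to keep the argument self-contained.)

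Next I would link $I_1$ and $I_2$ to $I_0$ by integration by parts. Choosing $u=z^n\log z$ and $dv=e^{-z}\,dz$, the boundary terms vanish at both ends (at $0$ since $z^n\log z\to 0$ for $n\ge 1$, at $+\infty$ thanks to $e^{-z}$), and a short computation yields $I_1=I_0+1$ and $I_2=2I_1+1=2I_0+3$. Consequently it suffices to establish the two-sided estimate $-1<I_0<0$: the left inequality is needed only to guarantee $I_1=I_0+1>0$, while $I_2=2I_0+3>1>0$ then follows a fortiori, and $I_0<0$ is the first assertion.

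To prove $-1<I_0<0$ I would symmetrize the integral via the change of variables $z\mapsto 1/z$, which converts $\int_1^\infty e^{-z}\log z\,dz$ into $-\int_0^1 z^{-2}e^{-1/z}\log z\,dz$ and therefore gives the representation $I_0=\int_0^1(\log z)\bigl(e^{-z}-z^{-2}e^{-1/z}\bigr)\,dz$. On $(0,1)$ one has $\log z<0$, so the whole matter reduces to the elementary inequality $0<e^{-z}-z^{-2}e^{-1/z}<1$ there. The upper bound is trivial since $e^{-z}<1$ and $z^{-2}e^{-1/z}>0$. For the lower bound, $e^{-z}>z^{-2}e^{-1/z}$ is equivalent to $g(z):=2\log z+z^{-1}-z>0$; here $g(1)=0$ and $g'(z)=2/z-1/z^2-1=-(1/z-1)^2\le 0$, so $g$ is strictly decreasing on $(0,1]$ and hence strictly positive on $(0,1)$. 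Thus the integrand in the representation of $I_0$ is strictly negative on $(0,1)$, giving $I_0<0$, and moreover $|I_0|<\int_0^1(-\log z)\,dz=1$, which yields $I_0>-1$. This completes the proof.

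The main obstacle is the sign of $I_0$: it is the only one of the three whose integrand changes sign on $(0,\infty)$, so a naive split at $z=1$ with term-by-term comparison does not settle it; the $z\mapsto 1/z$ folding together with the single-variable monotonicity of $g$ is what makes the sign transparent. Once that representation is available, the sharper bound $I_0>-1$ needed for $I_1>0$ comes out of the same estimate essentially for free, and the integration-by-parts recursion then delivers the signs of $I_1$ and $I_2$ automatically.
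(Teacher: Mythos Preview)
Your proof is correct. The recursion $I_n=nI_{n-1}+(n-1)!$ is valid (the boundary terms vanish as you check), and the folding representation $I_0=\int_0^1(\log z)\bigl(e^{-z}-z^{-2}e^{-1/z}\bigr)\,dz$ together with the monotonicity of $g(z)=2\log z+1/z-z$ cleanly yields $-1<I_0<0$.

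The paper organizes the argument differently: it handles the three integrals by separate elementary estimates rather than reducing everything to $I_0$. For $I_0<0$ it splits at $z=1$, uses $e^{-z}>e^{-1}$ on $(0,1)$ together with $\int_0^1\log z\,dz=-1$, and $\log z<z-1$ on $(1,\infty)$; the two pieces sum to something strictly negative. For $I_1>0$ it uses $z\log z\ge -e^{-1}$ on $(0,1]$ and a fresh integration by parts on $[1,\infty)$ (taking $u=\log z$, $dv=ze^{-z}\,dz$). Finally $I_2>I_1$ is obtained by the pointwise comparison $z^2\le z$ on $(0,1]$ and $z^2\ge z$ on $[1,\infty)$. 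So your closing remark that a split at $z=1$ cannot settle $I_0$ is slightly off: the paper's split does work, but only with inequalities sharper than bare sign-chasing. What your approach buys is uniformity---one recursion plus a single two-sided bound on $I_0$ disposes of all three integrals at once (and in fact of every $I_n$ with $n\ge 1$ by induction). The paper's route avoids the substitution trick but needs a new estimate at each step.
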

\begin{proof} It is evident that all three integrals exist. Note  that $\int_0^{1} \log{z}\,dz=-1$ and for $z>1$ we have inequality $\log z<z-1$. Therefore, integrating by parts, we get that
\begin{align*}
    \int_0^{\infty}e^{-z}\log{z}\,dz
    &=\int_0^{1}e^{-z}\log{z}\,dz+\int_1^{\infty}e^{-z}\log{z}\,dz
    \\
    &<e^{-1}\int_0^{1}\log{z}\,dz+\int_1^{\infty}e^{-z}(z-1)\,dz
    =-e^{-1}+e^{-1}=0.
\end{align*}
Next, observe that $\int_0^{1}e^{-z}z^2\log{z}\,dz > \int_0^{1}e^{-z}z\log{z}\,dz$ and $\int_1^{\infty}e^{-z}z^2\log{z}\,dz>\int_1^{\infty}e^{-z}z\log{z}\,dz$.  Consequently, it suffices  to prove that  $\int_0^\infty e^{-z}z\log z\,dz>0$.  Applying inequality $z\log z\ge -e^{-1},\,0<z\le 1$ and integrating by parts on the interval $[1, \infty)$, we   proceed as follows:
 \begin{align*}
\int_0^\infty e^{-z}z\log z\,dz &= \int_0^1 e^{-z}z\log z\,dz+\int_1^\infty e^{-z}z\log z\,dz\\
&> -e^{-1}(1-e^{-1})+\int_1^\infty  e^{-z}(z+1)z^{-1}\,dz \\
&= e^{-2}+\int_1^\infty  e^{-z} z^{-1}\,dz>0.  
\end{align*}
     Lemma is proved.
\end{proof}


\begin{thebibliography}{10}
\providecommand{\doi}[1]{DOI~\discretionary{}{}{}#1}

\bibitem{novikov}
Borovkov, K., Mishura, Y., Novikov, A., Zhitlukhin, M.: Bounds for expected
  maxima of {G}aussian processes and their discrete approximations.
\newblock Stochastics \textbf{89}(1), 21--37 (2017).
\newblock \doi{10.1080/17442508.2015.1126282}

\bibitem{campbell1972characterization}
Campbell, L.L.: Characterization of entropy of probability distributions on the
  real line.
\newblock Information and Control \textbf{21}, 329--338 (1972)

\bibitem{maje}
Cheridito, P., Kawaguchi, H., Maejima, M.: Fractional {O}rnstein-{U}hlenbeck
  processes.
\newblock Electron. J. Probab. \textbf{8}, no. 3, 14 (2003).
\newblock \doi{10.1214/EJP.v8-125}

\bibitem{feder1994relations}
Feder, M., Merhav, N.: Relations between entropy and error probability.
\newblock IEEE Transactions on Information theory \textbf{40}(1), 259--266
  (1994)

\bibitem{KMR2017}
Kukush, A., Mishura, Y., Ralchenko, K.: Hypothesis testing of the drift
  parameter sign for fractional {O}rnstein--{U}hlenbeck process.
\newblock Electron. J. Stat. \textbf{11}(1), 385--400 (2017).
\newblock \doi{10.1214/17-EJS1237}

\bibitem{MMRR2023}
Malyarenko, A., Mishura, Y., Ralchenko, K., Rudyk, Y.A.: Properties of various
  entropies of {G}aussian distribution and comparison of entropies of
  fractional processes.
\newblock Axioms \textbf{12}(11), 1026 (2023)

\bibitem{martin2011mathematical}
Martin, N.F.G., England, J.W.: Mathematical theory of entropy,
  \emph{Encyclopedia of Mathematics and its Applications}, vol.~12.
\newblock Cambridge University Press (2011)

\bibitem{MishuraRalchenkoSchilling2022}
Mishura, Y., Ralchenko, K., Schilling, R.L.: Analytical and computational
  problems related to fractional {G}aussian noise.
\newblock Fractal and Fractional \textbf{6}(11), 1--22 (2022).
\newblock \doi{10.3390/fractalfract6110620}

\bibitem{Mish-book}
Mishura, Y.S.: Stochastic calculus for fractional {B}rownian motion and related
  processes, \emph{Lecture Notes in Mathematics}, vol. 1929.
\newblock Springer-Verlag, Berlin (2008).
\newblock \doi{10.1007/978-3-540-75873-0}

\bibitem{MPRYT2018}
Mishura, Y.S., Piterbarg, V.I., Ralchenko, K.V., Yurchenko-Titarenko, A.Y.:
  Stochastic representation and pathwise properties of a fractional
  {C}ox--{I}ngersoll--{R}oss process.
\newblock Theory Probab. Math. Statist. (97), 167--182 (2018).
\newblock \doi{10.1090/tpms/1055}

\bibitem{valk}
Norros, I., Valkeila, E., Virtamo, J.: An elementary approach to a {G}irsanov
  formula and other analytical results on fractional {B}rownian motions.
\newblock Bernoulli \textbf{5}(4), 571--587 (1999).
\newblock \doi{10.2307/3318691}

\bibitem{papalexiou2012entropy}
Papalexiou, S.M., Koutsoyiannis, D.: Entropy based derivation of probability
  distributions: A case study to daily rainfall.
\newblock Advances in Water Resources \textbf{45}, 51--57 (2012)

\bibitem{renyi1961measures}
R{\'e}nyi, A.: On measures of entropy and information.
\newblock In: Proceedings of the fourth Berkeley symposium on mathematical
  statistics and probability, volume 1: contributions to the theory of
  statistics, vol.~4, pp. 547--562. University of California Press (1961)

\bibitem{ribeiro2021entropy}
Ribeiro, M., Henriques, T., Castro, L., Souto, A., Antunes, L., Costa-Santos,
  C., Teixeira, A.: The entropy universe.
\newblock Entropy \textbf{23}(2), Paper No. 222, 35 (2021).
\newblock \doi{10.3390/e23020222}

\bibitem{stratonovich}
Stratonovich, R.L.: Theory of Information and its Value.
\newblock Belavkin, R. V., Pardalos, P. M. and Principe, J. C. (eds.), Springer
  (2020).
\newblock \doi{10.1007/978-3-030-22833-0}

\bibitem{vancam}
Van~Campenhout, J.M., Cover, T.M.: Maximum entropy and conditional probability.
\newblock IEEE Trans. Inform. Theory \textbf{27}(4), 483--489 (1981).
\newblock \doi{10.1109/TIT.1981.1056374}

\bibitem{zahle}
Z\"ahle, M.: Integration with respect to fractal functions and stochastic
  calculus. {I}.
\newblock Probab. Theory Related Fields \textbf{111}(3), 333--374 (1998).
\newblock \doi{10.1007/s004400050171}

\end{thebibliography}
\end{document}